\documentclass[10pt]{article}
\usepackage{amsmath, amsfonts, amssymb, graphicx,psfrag,float,amsthm}
\usepackage{hyperref}
\usepackage{color}
\restylefloat{figure}
\usepackage{sectsty}

\sectionfont{\fontsize{12}{15}\selectfont}

\newtheorem{theorem}{Theorem}
\newtheorem{proposition}{Proposition}
\newtheorem{lemma}{Lemma}

\newtheorem{corollary}{Corollary}

\newtheorem{definition}{Definition}
\newtheorem{algorithm}{Algorithm}
\newtheorem{question}{Question}

\begin{document}

\title{Crosscap Number and Epimorphisms of Two-Bridge Knot Groups}
\author{Jim Hoste\\Pitzer College\\ \\ Patrick D.~Shanahan\\Loyola Marymount University\\ \\Cornelia A.~Van Cott\\University of San Francisco}
\date{}
\maketitle

\begin{abstract}
We consider the relationship between the crosscap number $\gamma$ of knots and a partial order on the set of all prime knots, which is defined as follows. For two knots $K$ and $J$, we say $K \geq J$ if there exists an epimorphism $f:\pi_1(S^3-K) \longrightarrow \pi_1(S^3-J)$. We prove that if $K$ and $J$ are 2-bridge knots and $K> J$, then $\gamma(K) \geq 3\gamma(J) -4$. We also
classify all pairs $(K,J)$ for which the inequality is sharp. A similar result relating the genera of two knots has been proven by Suzuki and Tran. Namely, if $K$ and $J$ are 2-bridge knots and $K >J$, then $g(K) \geq 3 g(J)-1$, where $g(K)$ denotes the genus of the knot $K$. 
\end{abstract}

\vspace{.25cm}

\par{\centering Dedicated to our friend and colleague, Mark Kidwell, 1948--2019.\par}

\section{Introduction}
The {\it crosscap} number, or {\it nonorientable genus},  of a knot $K$ in the 3-sphere, denoted here as $\gamma(K)$,  was introduced by Clark in \cite{Clark_1978}. It is defined as the smallest  first Betti number of any embedded, compact, connected,  nonorientable surface in $S^3$ that spans $K$. For convenience, the crosscap number of the unknot is defined to be $0$. Clark observed that a knot has crosscap number equal to $1$ if and only if it bounds a M\"obius band and hence is a $(2, 2k+1)$-cable of some knot (the centerline of the M\"obius band). Thus the $(2, 2k+1)$-torus knot has crosscap number equal to $1$ for all $k$. 

The crosscap number is not additive with respect to connected sum, however, Clark showed that for knots $K_1$ and $K_2$
\begin{equation}\label{connected sum bounds}
\gamma(K_1)+\gamma(K_2)-1 \le \gamma(K_1 \sharp K_2) \le \gamma(K_1)+\gamma(K_2).
\end{equation}
Clark also showed, that for any knot $K$
\begin{equation}\label{genus bound}
\gamma(K)\le 2 g(K)+1,
\end{equation}
where $g(K)$ is the genus of $K$. Clark asked if equality ever occurs in \eqref{genus bound}, and in \cite{Murakami_Yasuhara_1995}, Murakami and Yasuhara showed that this is the case for the  2-bridge knot $7_4$. This required them to compute $\gamma(7_4)=3$, a computation that has now been made {\sl much} easier by the work of Hatcher-Thurston \cite{HT:1985},  Bessho~\cite{Bessho_1994}, and Hirasawa-Teragaito~\cite{Hirasawa_Teragaito:2006}, who, taken together, provide a practical way to compute the crosscap number of any 2-bridge knot. In particular, one starts by using the work of Hatcher-Thurston to find all compressible, boundary incompressible surfaces that span the 2-bridge knot $K$. We now consider only those surfaces of minimum first Betti number, which we call $n$. Because of the relationship (which is  explained later in this paper) between 2-bridge knots, rational numbers, and continued fractions, it follows that $n$ is the length of the shortest continued fraction associated to $K$. If one of these surfaces is non-orientable, then the crosscap number of $K$ is $n$. If not, then it follows from \cite{Bessho_1994} that the crosscap number of $K$ is $n+1$. Starting with any continued fraction associated to $K$, Hirasawa and Teragaito determine a set of three ``moves'' that can be repeatedly applied to arrive at a shortest continued fraction for $K$, thus determining $n$. Morever, the shortest continued fraction also determines if the crosscap number is $n$ or $n+1$. 

The crosscap number has also been computed for any alternating knot or link from a theoretical standpoint in \cite{Adams-Kindred}. Then, the specific case of alternating knots was addressed concretely and independently in \cite{Ito} and \cite{Kindred_2019}. In addition, crosscap number has been computed for any torus knot~\cite{Teragaito_2004} and any pretzel knot~\cite{Ichihara:2010}. For alternating links, $K$, lower and upper bounds for $\gamma(K)$ that depend on the Jones polynomial of $K$ are given in \cite{Kalfagianni_Lee:2016}. For alternating knots with 12 or fewer crossings, these bounds are often sufficient to determine $\gamma(K)$. 

Building on the results of \cite{Hirasawa_Teragaito:2006}  together with \cite{Agol_2002}, \cite{ALSS_2020}, and \cite{ORS:2008}, we prove the following theorem.

\begin{theorem}\label{main theorem} If $K$ and $J$ are 2-bridge knots and $K>J$, then
$$\gamma(K)\ge 3 \gamma(J)-4.$$
\end{theorem}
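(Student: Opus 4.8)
The plan is to recast everything in terms of continued fractions and reduce Theorem~\ref{main theorem} to a single inequality about their lengths. Writing $n(K)$ for the length of a shortest continued fraction associated to a 2-bridge knot $K$ --- equivalently, by Hatcher--Thurston \cite{HT:1985} and Bessho \cite{Bessho_1994}, the minimum first Betti number of an essential spanning surface --- the discussion preceding the theorem gives $\gamma(K)\in\{n(K),n(K)+1\}$, with $\gamma(K)=n(K)$ exactly when some shortest continued fraction of $K$ has an odd entry (so that a minimal Betti number spanning surface is nonorientable). In particular $\gamma(K)\ge n(K)$ and $\gamma(J)\le n(J)+1$. Consequently it suffices to establish the purely continued-fraction inequality
\[
n(K)\ge 3\,n(J)-1,
\]
since then $\gamma(K)\ge n(K)\ge 3n(J)-1\ge 3\bigl(\gamma(J)-1\bigr)-1=3\gamma(J)-4$.

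To control the continued fractions of $K$ in terms of those of $J$, I would invoke the classification of epimorphisms between 2-bridge knot groups. By Ohtsuki--Riley--Sakuma \cite{ORS:2008}, together with Agol \cite{Agol_2002} and \cite{ALSS_2020}, which show that the ORS construction accounts for \emph{all} such epimorphisms, the hypothesis $K>J$ forces the fraction defining $K$ to admit a continued fraction of ORS form: a concatenation of an odd number $2\ell+1\ge 3$ of blocks, each a signed and possibly reversed copy of a fixed continued fraction $\mathbf a$ of the fraction defining $J$, with consecutive blocks separated by single even integers. Because $K>J$ is strict, $\ell\ge 1$, so at least three blocks occur; this is the source of the factor $3$.

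The heart of the argument, and the step I expect to be hardest, is the lower bound $n(K)\ge 3n(J)-1$. The ORS form immediately \emph{upper} bounds $n(K)$, but what is needed is that no \emph{other}, dramatically shorter, continued fraction of $K$ exists. I would prove this by analyzing the effect of the three Hirasawa--Teragaito reduction moves \cite{Hirasawa_Teragaito:2006} on a continued fraction presented in block form. The key point is that a reduction either takes place inside a single block --- where it can shorten that block only down to a continued fraction of $J$, hence to length at least $n(J)$ --- or takes place across a junction between blocks, where interaction with the even separators can absorb only a bounded number of terms. The delicate part is precisely that reductions may ``borrow'' across junctions, so one cannot argue block by block in isolation; instead one must bound the \emph{total} number of terms lost at the $2\ell$ junctions by a constant independent of $\ell$ and of the complexity of $J$. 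Summing the per-block bound $n(J)$ over the $2\ell+1\ge 3$ blocks and subtracting this bounded junction loss is what should yield $n(K)\ge 3n(J)-1$.

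Finally, to classify the pairs $(K,J)$ for which $\gamma(K)=3\gamma(J)-4$, I would trace back each inequality and record when all are sharp simultaneously: exactly three blocks ($\ell=1$), blocks realizing the minimal length $n(J)$, maximal absorption at the two junctions, and the parity conditions forcing $\gamma(J)=n(J)+1$ (every shortest continued fraction of $J$ is even, hence orientable) while $\gamma(K)=n(K)$ (some shortest continued fraction of $K$ has an odd entry). Tracking the parities of the entries through the concatenation is what makes the orientability bookkeeping --- and hence the exact additive constant $-4$ together with the sharpness classification --- go through.
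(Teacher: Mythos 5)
Your reduction to the purely length-based inequality $n(K)\ge 3n(J)-1$ is the fatal step: that inequality is \emph{false}, and the counterexamples are precisely the paper's sharp pairs. Take $J$ with even continued fraction $\mathbf{a}=(2n,2n)$, $n>1$, so $n(J)=2$, and take $K$ corresponding to $(\mathbf{a},0,\mathbf{a}^{-1},0,\mathbf{a})$, which after collapsing the zero connectors (Lemma~\ref{collapsing zeroes}) becomes $[2n,4n,4n,2n]$. Every entry has magnitude at least $4$, so none of the Hirasawa--Teragaito shortening moves apply; this is a shortest continued fraction and $n(K)=4=3n(J)-2<3n(J)-1$. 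Here $K>J$ by Theorem~\ref{ORS theorem and converse}, with $\gamma(J)=3$ and $\gamma(K)=5=3\gamma(J)-4$. The best true inequality of the form you want is $n(K)\ge 3n(J)-2$ (this is what the paper's depth lemmas, Lemma~\ref{isolation lemma} and Lemma~\ref{depth of adding one more tile}, yield), and feeding it into your chain $\gamma(K)\ge n(K)$, $\gamma(J)\le n(J)+1$ gives only $\gamma(K)\ge 3\gamma(J)-5$, one short of the theorem. The flaw is that decoupling $\gamma$ from $n$ discards exactly the needed information: whether the $+1$ orientability correction occurs for $J$ and for $K$ is \emph{correlated}, not independent. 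The paper's proof runs on this correlation: if the even continued fraction $\mathbf{a}$ of $J$ contains a $\pm 2$ entry (so $\gamma(J)=n(J)$), then the collapsed vector for $K$ inherits a $\pm 2$ and $\gamma(K)=n(K)$; if instead all entries of $\mathbf{a}$ have magnitude $\ge 4$ (so $\gamma(J)=n(J)+1$), then either some connector is $\pm 2$, which limits the junction losses and gives $\gamma(K)\ge 3\gamma(J)-3$, or no $\pm 2$ appears at all and $\gamma(K)=n(K)+1$ as well, the extra $+1$ on $K$'s side restoring the constant from $-5$ to $-4$ (Proposition~\ref{depth or depth plus one} and the paper's Cases I, IIa, IIb).

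Your sharpness discussion has the same defect in mirror image: you assert that equality requires $\gamma(K)=n(K)$ (a shortest fraction of $K$ with an odd entry), but in the actual equality case --- $K=[\mathbf{a},0,\mathbf{a}^{-1},0,\mathbf{a}]$ with all entries of $\mathbf{a}$ of magnitude $\ge 4$ --- both corrections occur, $\gamma(J)=n(J)+1$ \emph{and} $\gamma(K)=n(K)+1$, contradicting your characterization. Separately, even where your outline is salvageable, your proposed mechanism (bounding losses from Hirasawa--Teragaito shortening moves across block junctions) is not how the paper argues and is genuinely hard to control; the paper never manipulates shortest continued fractions of $K$ at all, but instead computes the depth $d(p/q)$ in the Farey graph directly from the unique even continued fraction via the recursive auxiliary-data table of Lemma~\ref{aux data depth}, together with concatenation lemmas for depth. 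But this is secondary: the primary gap is that your target inequality is false, so no amount of care in the junction analysis can complete the proof along the proposed route.
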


Here, by $K\ge J$, we mean the partial order on prime knots defined by $K \ge J$ if there exists an epimorphism $f:\pi_1(S^3-K) \to \pi_1(S^3-J).$ It is clear that this relation is reflexive and transitive. Proving it is antisymmetric is nontrivial. Suppose that $f: \pi_1(S^3-K)\to  \pi_1(S^3-J)$ and $g: \pi_1(S^3-J)\to  \pi_1(S^3-K)$ are epimorphisms. Then, the composition $g \circ f$ is an isomorphism because knot groups are Hopfian (see \cite{Kawauchi}, Lemma~14.2.5). Hence $f$ is an isomorphism and $J=K$ because prime knots are determined by their knot groups \cite{Wh:1987}. It is worth noting that there are other partial orders defined on knots, see  \cite{Endo}, \cite{SW:2006}, \cite{Taniyama}. A closely related order for all knots (not just prime knots)  is given by Silver and Whitten in \cite{SW:2006}. They define $K \ge_p J$ if there is an epimorphism $f:\pi_1(S^3-K) \to \pi_1(S^3-J)$ that preserves peripheral structure. When $K \ge J$ and $K$ and $J$ are 2-bridge, then it follows from work of Agol, Ohtsuki-Riley-Sakuma, and Aimi-Lee-Sakai-Sakuma (summarized in Theorem~\ref{ORS theorem and converse}) that every epimorphism is induced by a branched fold map between the knot exteriors  and, hence, takes meridians to meridians and preserves peripheral structure. Therefore, the two partial orders agree on the set of 2-bridge knots. All of the epimorphisms determined in \cite{HKMS:2011} are meridional, that is,  taking meridians to meridians. In general, however, there are prime knots $J$ and $K$ with $J \ge K$ for which there is no meridional epimorphism \cite{CS:2016}.

A similar result to Theorem~\ref{main theorem} that relates the genera of the two knots has been proven by Suzuki and Tran, \cite{Suzuki_Tran:2018}. Namely, if $K$ and $J$ are 2-bridge knots and $K > J$, then $g(K)\ge 3 g(J)-1$, where $g(K)$ denotes the genus of the knot $K$.

Note that if the hypothesis of Theorem~\ref{main theorem} is relaxed to $K\ge J$, then the result is not true. Because $K\ge K$ for every knot $K$, if the theorem were true, it would imply that $\gamma(K)\ge 3 \gamma(K)-4$, or equivalently, that $2\ge \gamma(K)$ for every knot $K$, which is false. Following the proof of Theorem~\ref{main theorem} we classify all pairs $(K,J)$ for which the inequality is sharp.   There is some evidence that the result is still true even if the assumptions that $K$ and $J$ are 2-bridge knots is dropped. In~\cite{HKMS:2011} and~\cite{Kitano-Suzuki}, the authors determine the partial order denoted by $\ge$ on the set of prime knots with up to 11 crossings. In all cases where $K>J$, the conclusion of Theorem~\ref{main theorem} is true, even when $K$ and $J$ are not both 2-bridge knots.

\begin{question}[]
If $K$ and $J$ are prime but not necessarily 2-bridge knots and $K>J$, then is $\gamma(K)\ge 3 \gamma(J)-4$ still true? What if, 
additionally, $K$ and $J$ are alternating?
\end{question}

In Section~\ref{continued fractions}, we summarize the results of \cite{Hirasawa_Teragaito:2006} that will be needed in this paper. Their analysis depends heavily on representing a 2-bridge knot $K$ by a shortest continued fraction. In Section~\ref{depth}, we recall the definition of the {\it depth} of a rational number defined by \cite{Hirasawa_Teragaito:2006} and use this to show how to derive the crosscap number directly from the unique even continued fraction representing $K$, thus eliminating the need to find a shortest continued fraction for $K$. Morover, as an alternative approach, we  give an algorithm for directly finding a shortest  continued fraction for $K$, thus bypassing the ``shortening moves'' introduced by Hirasawa-Teragaito. We then combine this with the work of \cite{Agol_2002}, \cite{ALSS_2020}, and \cite{ORS:2008} to prove Theorem~\ref{main theorem}. 

\section{Continued Fractions, 2-Bridge Knots, and Crosscap \\Number}\label{continued fractions}

There is a well-established connection between 2-bridge knots, 4-plat diagrams, and continued fractions. We recall  the bare rudiments of this theory and refer the reader to \cite{Cromwell_2004}, \cite{HT:1985},  or \cite{Rolfsen_1990} for more information.
Every 2-bridge knot or link has a 4-plat diagram as in Figure~\ref{4plat}.  Here a box labeled $a_i$ denotes $(-1)^{i+1}a_i$ right-handed half-twists between the two strands entering and exiting the box. Note that a negative number of right-handed twists is equal to the opposite number of left-handed twists and vice-versa. In particular, if $a_i>0$ for all $i$,  or $a_i<0$ for all $i$, then the diagram in Figure~\ref{4plat} is alternating and minimal. That is, it represents a knot with crossing number equal to $|a_1 +a_2 +\dots +a_k|$. See \cite{Cromwell_2004}  for a more extensive description of this topic. The reader is also referred to the paper \cite{Kauffman_Lambropoulou:2003} by Kauffman and Lambropoulou. While not one of the first papers on the subject, their paper is noteworthy for its completeness,  approach, historical description of the topic,  and extensive bibliography.

\begin{figure} 
   \centering
   \includegraphics[width=4in, angle=0]{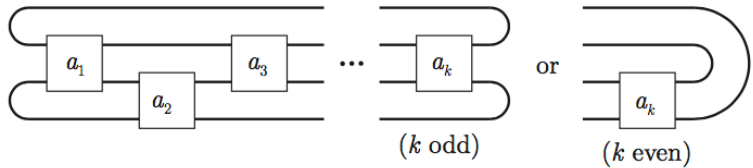}
   \caption{Standard 4-plat diagram of a 2-bridge knot.} 
\label{4plat}
\end{figure}

The sequence $a_1, a_2, \dots, a_k$ gives rise to the reduced fraction $\frac{p}{q}$, via the continued fraction, 
$$\frac{p}{q}=[a_1, a_2, \dots, a_k]=\cfrac{1}{a_1+\cfrac{1}{a_2+\cfrac{1}{\ddots +\cfrac{1}{a_k}}}}.$$ 

This associates to each 4-plat diagram of a 2-bridge knot $K$,  a rational number $p/q$, with $0<p<q$, $q$ odd, and $\gcd(p,q)=1$. (In the case of a 2-bridge link, $p$ will be odd and $q$ even.) Clearly, we can work backwards from any such fraction to obtain a 2-bridge knot, which we denote $K_{p/q}$. Moreover, the fractions $p/q$ and $p'/q'$ represent ambient isotopic 2-bridge knots if and only if $q'=q$ and either  $p' \equiv p \text{ (mod $q$)}$ or $p'p\equiv 1 \text{ (mod $q$)}$. Because we will also consider a 2-bridge knot $K_{p/q}$ and its mirror image, $K_{-p/q}$,  as equivalent, the equivalence on fractions becomes $q'=q$ and either  $p' \equiv \pm p \text{ (mod $q$)}$ or $p'p\equiv \pm1 \text{ (mod $q$)}$.   Note that $\gamma(K_{p/q})=\gamma(K_{-p/q})$.

In Figure~\ref{plumbedBands}, we see that the knot depicted in Figure~\ref{4plat}, can be viewed  as the boundary of a surface obtained by plumbing together $k$ twisted bands, where the $i$-th  band has $a_i$ half twists. 
Hence, each band is either an annulus, if $a_i$ is even, or a M\"obius band, if $a_i$ is odd, and the surface is orientable if and only if each $a_i$ is even. The surface strongly deformation retracts to the cores of the bands and hence  has first Betti number equal to $k$. If $a_i$ is even for all $i$, then the plumbed surface is orientable. However, in this case, Clark~\cite{Clark_1978} credits Mark Kidwell for the observation that introducing a single crossing to the knot diagram by means of a Reidemeister Type I move, corresponds to adding a M\"obius band to the Seifert surface, thus producing a nonorientable spanning surface for the same knot. This is illustrated in the last frame of Figure~\ref{plumbedBands}.

\begin{figure} 
   \centering
   \includegraphics[width=4in, angle=0]{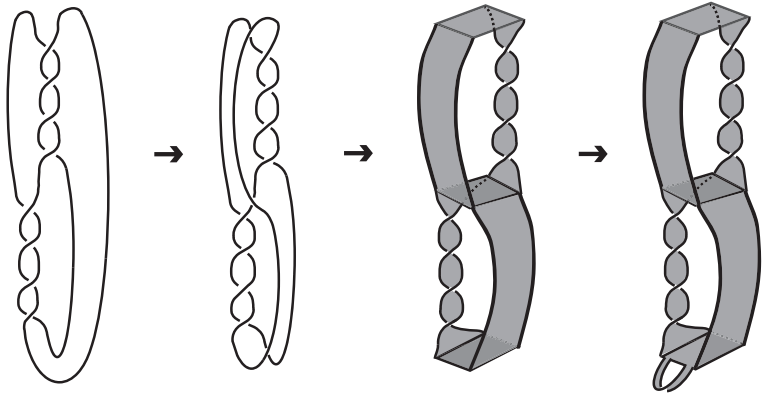}
   \caption{The knot $K_{4/15}$ and its nonorientable spanning surface.} 
\label{plumbedBands}
\end{figure}

These surfaces are the natural starting point in trying to find a surface with minimal genus (in the orientable case) or minimal first Betti number (in the nonorientable case). As it happens, in both cases, we need look no further. Gabai proved that the minimal genus orientable surface for $K_{p/q}$ is the surface that arises from the continued fraction expansion for $p/q$ which contains all even and nonzero entries~\cite{Gabai_1985}. Similarly, Bessho~\cite{Bessho_1994} proved that the surface given by a shortest--length continued fraction expansion containing an odd integer, realizes the crosscap number. Moreover, if a shortest--length continued fraction produces an orientable surface, then adding a M\"obius band \`a la Kidwell, produces a surface realizing the crosscap number.

The connection between these surfaces associated to $K_{p/q}$ and the length of continued fraction expansions of $p/q$ motivates deeper understanding of continued fraction expansions. Given a fraction $p/q$, there are infinitely many ways to express it as a continued fraction. In an attempt to use only even integers (as mentioned in the result of Gabai), one can obtain  the following result (see Lemma~2.1 in \cite{GHS:2012}).
\begin{lemma} Suppose $0<p<q$, $\gcd(p,q)=1$, $p$ is even and $q$ is odd. Then there exists a unique continued fraction $p/q=[a_1, a_2, \dots, a_{2n}]$ where each $a_i$ is even and nonzero. We call this the {\bf even continued fraction} for $p/q$.
\end{lemma}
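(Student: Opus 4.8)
The plan is to establish existence by a Euclidean-style algorithm that is forced to produce even quotients, and to establish uniqueness by showing that the leading entry can be read off from the value of the fraction. For existence, I would run the algorithm on the pair $(q,p)$: set $b_0=q$ and $b_1=p$, and for each $i\ge 1$ with $b_i\neq 0$ let $a_i$ be the even integer nearest to $b_{i-1}/b_i$, recording the remainder $b_{i+1}=b_{i-1}-a_ib_i$. Three invariants drive the argument. First, since $b_0$ is odd and $b_1$ is even, an easy induction shows consecutive $b_i$ always have opposite parity; hence $b_{i-1}/b_i$ is never an odd integer, so the nearest even integer is unambiguous and $|b_{i-1}/b_i-a_i|<1$ whenever $b_{i-1}/b_i\notin\mathbb{Z}$. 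Second, because $|b_{i-1}|>|b_i|$ one has $|b_{i-1}/b_i|>1$, which forces $a_i\neq 0$ and $|b_{i+1}|<|b_i|$; thus the absolute values strictly decrease and the process terminates. Third, $\gcd(b_i,b_{i+1})$ is invariant and equals $\gcd(q,p)=1$, so the last nonzero remainder is $\pm1$, which is odd; by the parity pattern this remainder sits at an even index, so the algorithm halts after an even number $2n$ of steps. Unwinding the recursion then gives $p/q=[a_1,\dots,a_{2n}]$ with every $a_i$ even and nonzero.

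Before treating uniqueness I would record the elementary estimate that any finite continued fraction $[c_1,\dots,c_k]$ with all $c_j$ even and nonzero satisfies $0<|[c_1,\dots,c_k]|<1$. This follows by induction on $k$: the base case $|1/c_k|\le 1/2$ is clear, and if $|[c_2,\dots,c_k]|<1$, then since $|c_1|\ge 2$ we have $|c_1+[c_2,\dots,c_k]|>1$, whence $|[c_1,\dots,c_k]|=1/|c_1+[c_2,\dots,c_k]|<1$.

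For uniqueness, the key point is that this estimate recovers $a_1$ from the value $x=[a_1,\dots,a_k]$. Writing $1/x=a_1+[a_2,\dots,a_k]$ (with the tail interpreted as $0$ when $k=1$), the tail is either empty or a nonzero reciprocal of absolute value less than $1$; in either case $1/x$ is not an odd integer, and $a_1$ is the unique even integer within distance $1$ of $1/x$. Consequently, if $[a_1,\dots,a_k]$ and $[a_1',\dots,a_{k'}']$ are equal even continued fractions, then $a_1=a_1'$, and subtracting shows their tails agree as values; an induction on the number of entries (the case $k=1$ handled by observing that a nonempty tail cannot vanish, since it is a reciprocal) yields $k=k'$ and $a_i=a_i'$ for all $i$. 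Specializing to the two even-length representations produced above gives the asserted uniqueness.

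I expect the main obstacle to be the bookkeeping that guarantees the algorithm behaves as claimed: verifying that the opposite-parity invariant makes each even quotient nonzero and each remainder strictly smaller, and that the gcd invariant pins the terminal remainder to $\pm1$ so that the length is forced to be even. Once these invariants are in place, both existence and the recovery step underlying uniqueness follow without further difficulty.
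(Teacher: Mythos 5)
Your proof is correct. Note that the paper itself gives no proof of this lemma: it cites Lemma~2.1 of \cite{GHS:2012} and remarks only that the expansion is found by ``a straightforward exercise that involves the Euclidean Algorithm.'' Your argument supplies a complete, self-contained version of exactly that outsourced content. The existence half is the expected one (division with nearest-even-integer quotients), and your three invariants are precisely what make it work: the alternating-parity invariant ensures $b_{i-1}/b_i$ is never an odd integer, so the nearest even integer is well defined and the remainders strictly decrease; the size invariant forces $a_i\neq 0$; and the gcd invariant pins the last nonzero remainder to $\pm 1$, which by parity sits at an even index, giving even length $2n$. The uniqueness half, which the paper's citation leaves entirely implicit, is handled cleanly by your estimate $0<|[c_1,\dots,c_k]|<1$ together with the observation that $a_1$ is then the unique even integer at distance less than $1$ from $1/x$ (distinct even integers being at least $2$ apart), followed by induction on the tails; this also shows a fortiori that every all-even continued fraction for $p/q$ has the same (even) length. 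One wording quibble: you assert $|b_{i-1}/b_i-a_i|<1$ ``whenever $b_{i-1}/b_i\notin\mathbb{Z}$,'' but what the argument needs (and what the parity invariant delivers) is that this holds unless $b_{i-1}/b_i$ is an \emph{odd} integer; when it is an even integer the distance is $0$, so the strict decrease $|b_{i+1}|<|b_i|$ holds in every case. That is a presentational point, not a gap.
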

Finding this even continued fraction expansion for a given $p/q$ is a straightforward exercise that involves the Euclidean Algorithm (again, see  \cite{GHS:2012} for details).
By comparison, finding a {\em shortest--length} continued fraction expansion associated to $p/q$ is more difficult and such an expansion is not necessarily unique. Hirasawa and Teragaito  \cite{Hirasawa_Teragaito:2006} prove that a continued fraction expansion of $p/q$ can be shortened {\em if and only if} the expansion has one or more of the following three characteristics: 
\begin{itemize}
\item Some $a_i$ in the continued fraction expansion is 0.
\item Some $a_i$ in the continued fraction expansion is $\pm 1$.
\item There exists a substring of $a_i$'s of the form $(2, -2)$ or $(2, -3, 2)$, or\\ $(-2, 3, -3, 2)$, and so on, where the entries in the substring alternate in sign, the substring begins and ends with elements of magnitude 2, and all other elements of the substring, of which there can be any number, including none, have magnitude 3. (We warn the reader  that this notation does not agree with that used by Hirasawa and Teragaito, who use a subtractive form of the continued fraction expression, rather than the additive form that we are using.)
\end{itemize}

Hirasawa and Teragaito define three shortening moves that can be applied to a continued fraction expansion in each of the three cases above. Of the three shortening moves, we will only need the exact description of the first move. The following Lemma is proven in  \cite{Hirasawa_Teragaito:2006}.

\begin{lemma}\label{collapsing zeroes} A zero can be removed from a continued fraction expansion as follows, $$p/q=r+[a_1, a_2, \dots, a_{i-1}, 0, a_{i+1}, a_{i+2}, \dots, a_n]=r+[a_1, a_2, \dots, a_{i-1}+ a_{i+1}, a_{i+2}, \dots, a_n],$$ for all $1<i<n$. (Here $r$ is any integer. We typically omit $r$ if $r=0$.)
\end{lemma}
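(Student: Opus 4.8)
The plan is to isolate the one genuinely local phenomenon at work: a zero entry in a continued fraction absorbs its two immediate neighbors into their sum, leaving everything else untouched. Since the integer $r$ is added identically to both sides of the asserted equation and does not interact with the continued fraction at all, it plays no role, and it suffices to prove
$$[a_1, \dots, a_{i-1}, 0, a_{i+1}, \dots, a_n]=[a_1, \dots, a_{i-2}, a_{i-1}+ a_{i+1}, a_{i+2}, \dots, a_n].$$
The hypothesis $1<i<n$ is exactly what guarantees that both $a_{i-1}$ and $a_{i+1}$ are honest entries of the sequence, so that this collapse even makes sense.

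The cleanest route I would take encodes continued fractions by matrices. Writing $M(a)=\begin{pmatrix} a & 1 \\ 1 & 0\end{pmatrix}$, an easy induction (with base cases $[a_1]=1/a_1$ and $[a_1,a_2]=a_2/(a_1a_2+1)$) shows that if $\prod_{j=1}^{n} M(a_j)=\begin{pmatrix}\alpha & \beta\\ \gamma & \delta\end{pmatrix}$, then $[a_1,\dots,a_n]=\gamma/\alpha$, that is, the value is the bottom-left entry over the top-left entry. The key observation is then the purely mechanical matrix identity
$$M(a_{i-1})\,M(0)\,M(a_{i+1})=\begin{pmatrix} a_{i-1} & 1\\ 1 & 0\end{pmatrix}\begin{pmatrix} 0 & 1\\ 1 & 0\end{pmatrix}\begin{pmatrix} a_{i+1} & 1\\ 1 & 0\end{pmatrix}=\begin{pmatrix} a_{i-1}+a_{i+1} & 1\\ 1 & 0\end{pmatrix}=M(a_{i-1}+a_{i+1}),$$
which says precisely that the factor $M(0)$ fuses its two neighbors into a single factor.

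To finish, I would substitute this identity into the full product. Because $1<i<n$, the three consecutive factors $M(a_{i-1})\,M(0)\,M(a_{i+1})$ occur as an interior block of $\prod_{j=1}^{n} M(a_j)$, and replacing that block by the single factor $M(a_{i-1}+a_{i+1})$ yields exactly the matrix product associated to the collapsed sequence $a_1,\dots,a_{i-2},a_{i-1}+a_{i+1},a_{i+2},\dots,a_n$. Equal matrices have equal $\gamma/\alpha$ ratios, which is the desired identity; adding $r$ to both sides then gives the stated equation. There is no serious obstacle here, since the content is elementary, but I would flag two points of care. First, one must respect the paper's reciprocal convention (value $=\gamma/\alpha$, not $\alpha/\gamma$) so as not to accidentally invert the fraction. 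Second, the matrix formulation is preferable to a direct computation with the nested fractions: a direct argument would set $x_j=a_j+1/x_{j+1}$ and use $x_i=1/x_{i+1}$ to obtain $x_{i-1}=(a_{i-1}+a_{i+1})+1/x_{i+2}$, but this manipulation silently assumes the intermediate denominators are nonzero, whereas the matrix identity holds unconditionally and sidesteps every division-by-zero concern.
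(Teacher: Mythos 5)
Your proof is correct. Note, however, that the paper does not actually prove Lemma~\ref{collapsing zeroes}: it quotes it from Hirasawa--Teragaito \cite{Hirasawa_Teragaito:2006}, so your argument should be compared against the standard one rather than against anything in this paper. The standard route is the local telescoping identity
$$a_{i-1}+\cfrac{1}{0+\cfrac{1}{a_{i+1}+x}}=a_{i-1}+a_{i+1}+x,$$
applied with $x$ the tail $[a_{i+2},\dots,a_n]$ of the nested fraction --- exactly the direct manipulation you flag at the end of your write-up. Your matrix formulation is a genuinely cleaner packaging of that computation: the identity $M(a_{i-1})M(0)M(a_{i+1})=M(a_{i-1}+a_{i+1})$ holds unconditionally, and your correspondence (value $=\gamma/\alpha$, respecting the paper's reciprocal convention, with correctly verified base cases $[a_1]=1/a_1$ and $[a_1,a_2]=a_2/(a_1a_2+1)$) converts it into the lemma read projectively in $\mathbb{Q}\cup\{\infty\}$. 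This is a real, if small, gain in rigor: since entries $0$ and $\pm 1$ are permitted, intermediate tails can be $0$ or $\infty$ (for instance $[1,0,-1]$), so the naive nested-fraction manipulation does silently divide by zero in edge cases, while the matrix identity together with the projective reading of $\gamma/\alpha$ covers them uniformly. You also correctly isolate the two bookkeeping points: the hypothesis $1<i<n$ ensures the block $M(a_{i-1})M(0)M(a_{i+1})$ sits inside the product, and the integer $r$ passes through both sides untouched.
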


The main results of \cite{Hirasawa_Teragaito:2006} give the following algorithm for computing $\gamma(K_{p/q})$, which rests on the above discussion.

\begin{algorithm}[Hirasawa--Teragaito]\label{HT algorithm}
To compute the crosscap number $\gamma(K)$ of the 2-bridge knot $K$:
\begin{enumerate}
\item Choose some fraction $p/q$ that represents $K$.
\item Choose some continued fraction expansion $p/q=r+[a_1, a_2, \dots, a_n]$ of length $n$ for $p/q$. This expansion can be shortened if and only if the expansion has one or more of the three characteristics mentioned above.
\item Through repeated application of the three shortening moves, transform the given continued fraction expansion into  a shortest length continued fraction expansion. (We continue here to denote a shortest continued fraction expansion as  $r+[a_1, a_2, \dots, a_n]$.)
\item Once a shortest length continued fraction expansion for $p/q$ has been found, if some $a_i$ is odd or equal to $\pm2$, then $\gamma(K_{p/q})=n$,  otherwise $\gamma(K_{p/q})=n+1$.
\end{enumerate}
\end{algorithm}
As an example, consider the knot $7_4$ which is the 2-bridge knot $K_{4/15}$. Note that $4/15=[4,-4]$. Because none of the entries are $0$, $\pm 1$,  or $\pm 2$, this continued fraction cannot be made shorter using the three shortening moves and hence is shortest. It follows that $\gamma(K_{4/15})=3$. A nonorientable spanning surface with minimal first Betti number is shown in Figure~\ref{plumbedBands}. We invite the reader to compare this with the much more difficult computation given in \cite{Murakami_Yasuhara_1995} which required proving that the crosscap number is not $2$ by first showing that the Goeritz matrix of any knot with crosscap number equal to $2$ has a certain form, and then showing that this form cannot be achieved by the knot $7_4$.

Computing the crosscap number as described in Algorithm~\ref{HT algorithm} has the slight drawback that  no direct instructions are given for finding a shortest--length continued fraction, although it is not hard to imagine an algorithmic scheme for the application of the three shortening moves.  In this paper, we discuss an alternative approach to computing the crosscap number using the unique even continued fraction expansion of $p/q$. This approach  avoids the need to find a shortest-length representative. As a result, we are able to give, later in the paper,  Algorithm~\ref{HSV algorithm} for computing crosscap number that we believe is more direct, simpler,  and faster than Algorithm~\ref{HT algorithm}. Moreover, Algorithm~\ref{HSV algorithm} also facilitates the comparison of the crosscap numbers of 2-bridge knots that are related by the partial order on all prime knots. While somewhat off the main course of this paper, and not needed for the proof of Theorem~\ref{main theorem},  we describe in Algorithm~\ref{grandparentpath} a very direct way to find a shortest continued fraction expansion of $p/q$. However, computer experimentation suggests that this approach is no faster than Algorithm~\ref{HSV algorithm}.

As already mentioned in the Introduction, we may define a partial order on the set of prime knots by declaring $K\ge J$ if there is an epimorphism $\phi:\pi_1(S^3-K) \to \pi_1 (S^3-J)$. Note that every knot is greater than or equal to both itself and the unknot.
In \cite{ORS:2008}, Ohtsuki, Riley and Sakuma systematically construct epimorphisms between 2-bridge knots. 
In particular, they show that if $J$ corresponds to the fraction $p/q=[{\bf a}]$, and  $K$ corresponds to the fraction $p'/q'=[{\bf b}]$,  where the vector {\bf b} has the form
$${\bf b}=(\epsilon_1 {\bf a}, {\bf 2 c_1}, \epsilon_2 {\bf a}^{-1}, {\bf 2  c_2}, \epsilon_3 {\bf a}, {\bf 2  c_3}, \dots, \epsilon_n  {\bf a}^{(-1)^{n-1}}),$$
where each $\epsilon_i \in \{-1, 1\}$, each $c_i$ is an integer,  and if $c_i=0$ then $\epsilon_i=\epsilon_{i+1}$, then there is a {\sl branch fold map} from the complement of $K$ onto the complement of $J$. This map induces a meridional epimorphism from the group of $K$ to the group of the $J$. (Furthermore, the map preserves peripheral structure.) Here we use $({\bf a}, {\bf b})$ to represent the concatenation of the vectors {\bf a} and {\bf b}, denote the reverse of the vector $\bf a$ as ${\bf a}^{-1}$,  and  denote by ${\bf 2  c_i}$ the vector $(2 c_i)$ of length one. Ohtsuki, Riley and Sakuma's result motivates the following definition.

\begin{definition}\label{parsing} {\rm Let ${\bf a}, {\bf b}$ be vectors. We say that $\bf b$ {\bf admits a  parsing with respect to} $\bf a$ if 
$${\bf b}=(\epsilon_1{\bf a}, {\bf2  c_1}, \epsilon_2 {\bf a}^{-1}, {\bf 2  c_2}, \epsilon_3 {\bf a}, {\bf 2  c_3}, \dots, \epsilon_n  {\bf a}^{(-1)^{n-1}}),$$
where each $\epsilon_i \in \{-1, 1\}$, each $c_i$ is an integer, and if $c_i=0$ then $\epsilon_i=\epsilon_{i+1}$.}

\end{definition}
It is easy to prove that if {\bf b} parses with respect to {\bf a}, then this parsing is unique. 
If {\bf b} parses with respect to {\bf a}, then we call the vectors, ${\bf 2  c_i}$, {\bf a}-{\bf connectors}, or more simply, {\bf connectors}. The {\bf a}-connectors separate the  {\bf a}-{\bf tiles}: $\epsilon_1 {\bf a},  \epsilon_2 {\bf a}^{-1}, \dots,$ and $ \epsilon_n  {\bf a}^{(-1)^{n-1}}$. The next theorem forms the basis of our overall approach to the proof of Theorem~\ref{main theorem}.

\begin{theorem}\label{ORS theorem and converse} 
Let $K$ and $K_r$ be 2-bridge knots where  $r = [\mathbf a]=[a_1, a_2, \dots , a_{m}]$ and $|a_i|>1$ for all $i$. Then $K \ge K_r$,  if and only if there exists $\tilde r=[{\bf b}]$ where $\bf b$ parses with respect to $\bf a$ using an odd number of tiles and $K=K_{\tilde r}$. 
\end{theorem}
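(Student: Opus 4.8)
The plan is to prove the two implications separately, drawing the forward direction from the Ohtsuki--Riley--Sakuma construction recalled just before Definition~\ref{parsing}, and the reverse direction from the affirmative resolution of the ORS conjecture by Agol \cite{Agol_2002} together with Aimi--Lee--Sakai--Sakuma \cite{ALSS_2020}. The only genuinely new ingredient we must supply is the parity bookkeeping that forces the number of tiles to be odd; everything else is a matter of citing the right external input and checking that it is stated relative to our normalization of $\mathbf a$.

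For the ``if'' direction, suppose $\tilde r = [\mathbf b]$, where $\mathbf b$ parses with respect to $\mathbf a$ using an odd number of tiles, and $K = K_{\tilde r}$. Since an odd number of tiles is a legitimate instance of the parsing in Definition~\ref{parsing} (for $n$ odd the final tile is $\epsilon_n \mathbf a$), the vector $\mathbf b$ has exactly the shape required by the ORS theorem \cite{ORS:2008} quoted above. Hence there is a branch fold map of $S^3 - K_{\tilde r}$ onto $S^3 - K_r$ inducing a meridional, peripheral-structure-preserving epimorphism $\pi_1(S^3-K) \to \pi_1(S^3-K_r)$, and therefore $K \ge K_r$. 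I would note that the hypothesis that $K$ is a knot is consistent here precisely because the number of tiles is odd, a point that becomes the crux of the converse.

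For the ``only if'' direction, assume $K \ge K_r$, so there is an epimorphism $\pi_1(S^3-K)\to\pi_1(S^3-K_r)$. The substantive input is the converse of the ORS theorem: by \cite{Agol_2002} together with \cite{ALSS_2020}, every epimorphism between two-bridge knot groups is induced by a branch fold map, and its existence forces $K = K_{[\mathbf b]}$ for some $\mathbf b$ obtained from $\mathbf a$ by the parsing construction. (Here I would first fix the reduced form of $\mathbf a$ with $|a_i|>1$, using that the two reduced continued fractions $[\mathbf a]$ and $[\mathbf a^{-1}]$ of $r$ give the same knot and that reversing $\mathbf b$ reverses its parsing, so the choice between $\mathbf a$ and $\mathbf a^{-1}$ is immaterial.) It then remains to show that the number of tiles $n$ is odd. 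For this I would track the parities of the numerator and denominator of $[\mathbf b]$ through the product of the standard continuant matrices $\left(\begin{smallmatrix} a_i & 1\\ 1 & 0\end{smallmatrix}\right)$ reduced mod $2$: appending a tile $\epsilon_i \mathbf a^{\pm 1}$ together with its connector multiplies in a fixed residue class mod $2$, and a direct computation shows that the denominator of $[\mathbf b]$ is odd exactly when $n$ is odd. Since $K$ is a two-bridge \emph{knot} (odd denominator) rather than a link, $n$ must be odd, completing the direction. A reassuring sanity check is $n=1$, where $K=K_r$, versus $n=2$, where the parsing already yields a link: for $\mathbf a = (3)$ the trefoil $K_{1/3}$ gives $[3,2,3]=7/24$, whose even denominator marks a two-bridge link.

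The main obstacle is not in our own argument but in correctly invoking the external converse: the completeness statement that the ORS epimorphisms are the only ones between two-bridge knot groups is exactly the combined content of \cite{Agol_2002} and \cite{ALSS_2020}, and care is required to confirm that their classification is phrased relative to the same reduced continued fraction $[\mathbf a]$ we use and that it accounts for the identification of a knot with its mirror image. Once that citation is secured, the only work genuinely left to us is the elementary mod-$2$ parity computation isolating ``odd number of tiles,'' which is where I would concentrate the actual effort.
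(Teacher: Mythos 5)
Your ``if'' direction coincides with the paper's: it is exactly the application of Proposition~5.1 and Theorem~1.1 of \cite{ORS:2008}, and nothing more is needed there. Your mod-2 continuant computation establishing that the number of tiles must be odd is also correct (I checked it in all residue cases), and it is in fact a point the paper leaves implicit: since the orbit description produces parsings with any number of tiles, it is precisely the knot-versus-link parity of the denominator that forces $n$ to be odd, so supplying that argument is a genuine, if small, contribution.

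The gap is in the ``only if'' direction, where you attribute to \cite{Agol_2002} and \cite{ALSS_2020} a statement stronger than what they prove, and in doing so you skip what is actually the substance of the paper's proof. The input the paper invokes (Proposition~8.1 of \cite{ALSS_2020}) does not say ``$K=K_{[\mathbf b]}$ for some $\mathbf b$ parsing with respect to $\mathbf a$''; it says that $\tilde r$ \emph{or} $\tilde r+1$ lies in the $\hat\Gamma_{p/q}$-orbit of $p/q$ or $\infty$, \emph{or} in the $\hat\Gamma_{p'/q}$-orbit of $p'/q$ or $\infty$, where $pp'\equiv 1 \pmod q$. Converting that into a parsing with respect to the \emph{given} vector $\mathbf a$ is the real work: one uses Proposition~5.1 of \cite{ORS:2008} to identify orbits with parsings, absorbs the possible translate $\tilde r+1$ (harmless, since integer translation does not change the knot), and then must deal with the second alternative involving $p'/q$. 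Your observation that $[\mathbf a]$ and $[\mathbf a^{-1}]$ give the same knot and that reversing $\mathbf b$ converts a parsing with respect to $\mathbf a^{-1}$ into one with respect to $\mathbf a$ handles only the clean subcase $p'/q=(-1)^{m-1}[\mathbf a^{-1}]$. But the second alternative also allows $p'/q=\pm 1+(-1)^{m-1}[\mathbf a^{-1}]$, in which case the orbit elements are parsings with respect to a continued fraction expansion of $p'/q$ itself --- a vector \emph{not} obtained from $\mathbf a$ by reversal --- and the paper must then produce a different fraction $\tilde s$ with $K_{\tilde s}=K_{\tilde r}$, by adjusting connectors, whose vector does parse with respect to $\mathbf a^{-1}$. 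Deferring all of this to ``correctly invoking the external converse'' leaves exactly that reduction unproved; it is not a citation-checking issue but a genuine argument that your proposal does not contain.
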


Theorem~\ref{ORS theorem and converse} follows directly from the work of Ohtsuki-Riley-Sakuma \cite{ORS:2008}, Agol \cite{Agol_2002}, and  Aimi-Lee-Sakai-Sakuma \cite{ALSS_2020}. In particluar, if $\tilde r = [\mathbf b]$ and $\mathbf b$ parses with respect to $\mathbf a$, then  Proposition~5.1 and Theorem~1.1 of \cite{ORS:2008} imply that $K_{\tilde r} \ge K_r$. Conversely, if $K=K_{\tilde r} \ge K_r$ where $r=p/q$, then by Proposition~8.1 of \cite{ALSS_2020} either 
\begin{enumerate}
    \item[(1)] $\tilde r$ or $\tilde r+1$ belongs to the $\hat \Gamma _{p/q}$-orbit of  $p/q$ or $\infty$, or
    \item[(2)]$\tilde r$ or $\tilde r+1$ belongs to the $\hat \Gamma _{p'/q}$-orbit of  $p'/q$ or $\infty$, with $pp'\equiv 1 \text{ (mod $q$).}$
\end{enumerate}
Here $\hat\Gamma_r$ is the group of automorphisms of the Farey graph (defined in Section~\ref{depth}) generated by reflections in edges which have an endpoint equal to either $\infty$ or $r$ and, by Proposition~5.1 of \cite{ORS:2008}, the orbit of $r$ or $\infty$ is precisely those rationals $2c+[\mathbf b]$ where $\mathbf b$ parses with respect to $\mathbf a$. So case (1) gives the desired conclusion. For case (2), by considering equivalent orbits, it is not hard to show that we may assume that $|r'|<1$ which then implies that $p'/q=(-1)^{m-1}[\bf a^{-1}]$ or $p'/q=\pm 1+(-1)^{m-1}[\bf a^{-1}]$. If $p'/q=(-1)^{m-1}[\bf a^{-1}]$, then $\tilde r$ or $\tilde r+1$ corresponds to a vector that parses with respect to $\mathbf a^{-1}$. Finally, if $p'/q=\pm 1+(-1)^{m-1}[\bf a^{-1}]$, then it can be shown (by adjusting connectors appropriately) that there is an $\tilde s$ such that $K_{\tilde s}=K_{\tilde r}$ and $\tilde s$ corresponds to a vector that parses with respect to $\mathbf a^{-1}$.

\section{Depth}\label{depth}

There is a beautiful correspondence between continued fraction expansions of $p/q$ and edge paths from $1/0$ to $p/q$ in the {\it Farey Graph}  shown in Figure~\ref{farey graph}, where the graph is embedded in the Poincar\'e Disk (together with the circle at infinity).  Two fractions, $a/b$ and $c/d$,  in $\mathbb Q \cup \{\frac{1}{0}\}$ are connected by an edge in the Farey Graph if and only if $ad-bc=\pm 1$. The {\bf mediant}, or {\bf child},  of the two fractions $a/b$ and $c/d$ is defined to be $\frac{a+c}{b+d}$ and, therefore, it is connected by an edge to each of its {\bf parents}, $a/b$ and $c/d$. The entire graph can be generated recursively by taking mediants, starting from $1/0$ and $0/1$. (To generate the negative rational numbers, we start with $(-1)/0$ and $0/1$.) Perhaps the best reference for this material is Hatcher's excellent introduction to number theory from a geometric point of view~\cite{Hatcher}.

\begin{figure}[ht] 
   \centering
   \includegraphics[width=4in]{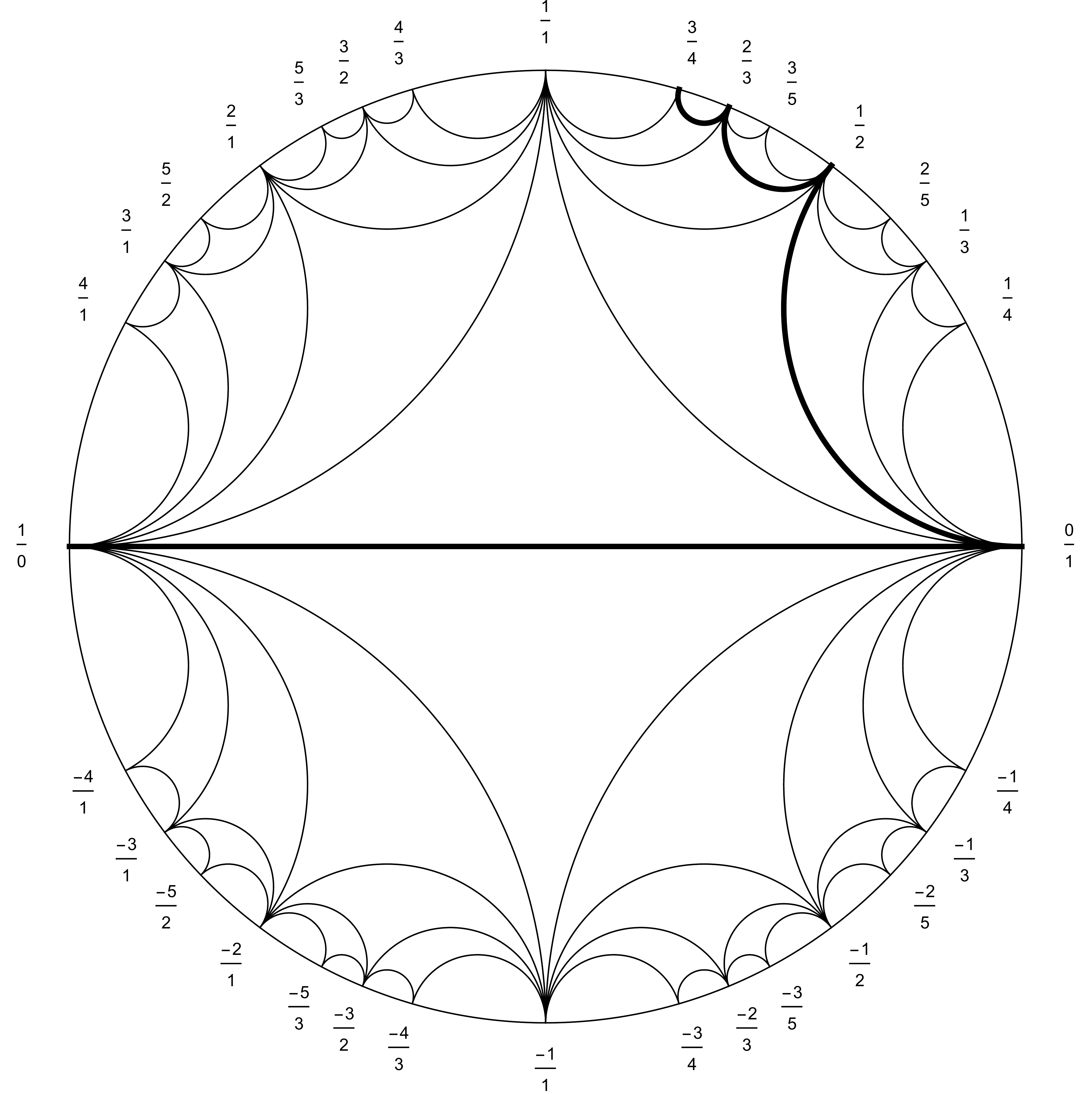} 
   \caption{The Farey Graph  and the path given by $3/4=[2,-2,2]$.}
\label{farey graph}
\end{figure}

Given a continued fraction expansion
$p/q=r+[a_1,a_2, \dots, a_n]$ where $|a_i|>1$  for all $i$, 
let $p_0/q_0=r$ and, for $i>0$, define $p_i/q_i=r+[a_1,a_2, \dots, a_i]$. This defines the {\bf edge path} 
$$\frac{1}{0} \to \frac{p_0}{q_0} \to \frac{p_1}{q_1} \to \dots \to \frac{p_n}{q_n},$$
which always proceeds, repeatedly,  from parent to child, except for the edge $\frac{1}{0} \to \frac{0}{1}$ in the case when $r=0$.
At the vertex $p_{i-1}/q_{i-1}$, the path turns through $|a_i|$ triangles  on the left side of the path if $(-1)^i a_i<0$ and on the right side if $(-1)^i a_i>0$.  Because of this, the entries $a_i$ are sometimes called the {\bf turning numbers}.

For example, the path from $1/0$ to $3/4$ corresponding to the expansion $3/4=[2,-2,2]$ is highlighted in Figure~\ref{farey graph}.  Consecutive edges are always separated by 2 triangles on the left because the turning numbers alternate in sign. In general,  the turning switches from one side of the path to the other when $a_i$ and $a_{i+1}$ have the same sign, and stays on the same side of the path when $a_i$ and $a_{i+1}$ have opposite signs.

Closely related to the length of the shortest path from $1/0$ to the fraction $p/q$ is the depth of $p/q$, which is defined in \cite{Hirasawa_Teragaito:2006} as follows.

\begin{definition}\label{definition of depth} The {\bf depth} of $p/q\in \mathbb Q \cup \{\frac{1}{0}\}$, denoted as $d(p/q)$,  is zero if $p/q\in \mathbb Z \cup \{\frac{1}{0}\}$. Otherwise, the depth of $p/q$ is one more than the minimum of the depths of its two parents.\end{definition} 

It follows directly from the definition that the  depth of a child minus the depth of its parent is always zero or one. The following fact regarding depth is proven in \cite{Hirasawa_Teragaito:2006}.

\begin{lemma}\label{HT depth results} Suppose $p/q=r+[a_1,a_2, \dots, a_n]$ defines the edge path
$$\frac{1}{0} \to \frac{p_0}{q_0} \to \frac{p_1}{q_1} \to \dots \to \frac{p_n}{q_n},$$
from $1/0$ to $p/q$. The path is
a shortest path from $1/0$ to $p/q$ if and only if $d(p_i/q_i)=i$ for all $i\ge 0$. 
\end{lemma}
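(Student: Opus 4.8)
The plan is to prove both implications by comparing the length of the convergent path
$P:\ \frac{1}{0}\to\frac{p_0}{q_0}\to\cdots\to\frac{p_n}{q_n}$, which has exactly $n+1$ edges, against the graph distance from $\frac{1}{0}$ to $p/q$ in the Farey graph. First I would record the basic monotonicity. Every step $\frac{p_{i-1}}{q_{i-1}}\to\frac{p_i}{q_i}$, with the possible exception of the initial step $\frac{1}{0}\to\frac{0}{1}$ (whose endpoints both have depth $0$), proceeds from a parent to a child, so the already-noted fact that the depth of a child exceeds that of its parent by $0$ or $1$ gives $d(p_i/q_i)\le d(p_{i-1}/q_{i-1})+1$. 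Since $d(p_0/q_0)=0$, induction yields $d(p_i/q_i)\le i$ for every $i\ge 0$.

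For the forward implication I would argue the contrapositive. Suppose $d(p_j/q_j)<j$ for some $j$ (necessarily $j\ge 1$). From any vertex $v$ one can reach $\frac{1}{0}$ in at most $d(v)+1$ edges by repeatedly stepping to a parent of strictly smaller depth until a depth-$0$ vertex (an integer or $\frac{1}{0}$) is reached, and then, if needed, crossing the single edge joining that integer to $\frac{1}{0}$. Applying this to $v=p_j/q_j$ gives a path from $\frac{1}{0}$ to $p_j/q_j$ of length at most $d(p_j/q_j)+1\le j$, strictly shorter than the length-$(j+1)$ initial segment of $P$. Concatenating it with the tail $\frac{p_j}{q_j}\to\cdots\to\frac{p_n}{q_n}$ of $P$ produces a walk from $\frac{1}{0}$ to $p/q$ of length at most $n<n+1$, so $P$ is not shortest. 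This direction is entirely elementary.

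For the converse I would show that if $d(p_i/q_i)=i$ for all $i$, then $P$, of length $n+1$, realizes the distance from $\frac{1}{0}$ to $p/q$; equivalently, every edge path from $\frac{1}{0}$ to $p/q$ has at least $d(p/q)+1=n+1$ edges. Two observations drive this lower bound: (a) the only neighbors of $\frac{1}{0}$ are the integers, all of depth $0$, so the first edge of any path lands at a depth-$0$ vertex; and (b) depth is $1$-Lipschitz along the graph, i.e.\ $|d(u)-d(v)|\le 1$ whenever $u,v$ are Farey-adjacent. Granting (a) and (b), a path $\frac{1}{0}=w_0,w_1,\dots,w_L=p/q$ satisfies $d(w_L)\le d(w_1)+(L-1)=L-1$, whence $L\ge d(p/q)+1=n+1$, as desired.

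The main obstacle is (b). Its subtlety is that the hypothesis supplies the depth inequality only across parent--child edges, whereas a competing path may traverse arbitrary edges, and a priori the depth could drop sharply across an edge whose shallower endpoint inherits its small depth from its \emph{other} parent. I would dispatch (b) using the standard fact that in the Farey graph every edge joins a vertex to one of its two parents (see \cite{Hatcher}), so that (b) is equivalent to the assertion that the two parents of any vertex have depths differing by at most $1$. This I would prove by induction on the sum of the depths of the two endpoints of an edge: writing the edge as a parent--child pair and expanding $d(\text{child})=1+\min$ of the depths of its two parents, the case in which the child is the deeper endpoint is immediate from the formula, while the case in which the child is the shallower endpoint reduces, via the triangle formed with the child's other parent, to an edge of strictly smaller depth-sum, to which the inductive hypothesis applies. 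This same induction simultaneously makes rigorous the ``child minus parent is $0$ or $1$'' statement used above.
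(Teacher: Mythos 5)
Your proof is correct, but there is nothing in the paper to compare it against: the paper does not prove this lemma at all, it quotes it as a result established by Hirasawa--Teragaito. Your write-up therefore supplies a self-contained argument where the paper only cites, and it is sound. The easy half (the monotonicity $d(p_i/q_i)\le i$ and the contrapositive of the forward implication via descent through min-depth parents to $1/0$) uses only $d(\mathrm{child})\le d(\mathrm{parent})+1$, which is immediate from the definition of depth. You are also right that the entire content of the converse sits in your claim (b), the $1$-Lipschitz property of depth across arbitrary Farey edges: since depth is defined only through parents, a careless argument would implicitly assume a vertex can never be strictly shallower than one of its parents, and that is precisely what must be proved. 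Your induction on the depth-sum of an edge's endpoints closes this correctly: if the child $y$ of an edge $\{x,y\}$ were strictly shallower than $x$, then its other parent $x'$ satisfies $d(x')=d(y)-1$, so the edge $\{x,x'\}$ (the two parents of a vertex are Farey-adjacent) has strictly smaller depth-sum, and the inductive hypothesis gives $d(x)\le d(x')+1=d(y)$, a contradiction; the other case is immediate from the defining formula. Two points worth making explicit in a final write-up, though neither is a gap: (i) the one edge of the Farey graph that is not a parent--child pair, namely $\{1/0,\,0/1\}$, has both endpoints of depth $0$ and so lies in the base case of your induction, which legitimizes the phrase ``writing the edge as a parent--child pair''; and (ii) the adjacency of the two parents of a vertex, which your triangle step uses, is built into the mediant construction and deserves a sentence of justification.
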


Given a vector ${\bf a}=(a_1, a_2, \dots, a_m)$, we define its depth as $d({\bf a})=d(p/q)$ where $p/q=[{\bf a}]=[a_1, a_2, \dots, a_m]$.
Once the depth of $p/q$ has been determined, we can compute $\gamma(K_{p/q})$ from the even continued fraction expansion of $p/q$ using the following proposition (which is a restatement of Hirasawa and Teragaito's work).

\begin{proposition}\label{depth or depth plus one}Let $p/q=[{\bf a}]=[a_1, a_2, \dots, a_{2n}]$ be the unique even continued fraction expansion of  $p/q$. Then
$$\gamma \left(K_{p/q} \right) =\left\{ \begin{array}{ll}  d(p/q), & \mbox{if $a_i=\pm 2$ for some $i$,}\\d(p/q)+1, & \mbox{otherwise}. \end{array} \right.$$
\end{proposition}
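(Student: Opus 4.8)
The plan is to combine Algorithm~\ref{HT algorithm} with Lemma~\ref{HT depth results} to recast the Hirasawa--Teragaito computation entirely in terms of the even continued fraction. First I would record the dictionary between depth and length: by Lemma~\ref{HT depth results} a continued fraction $p/q=r+[a_1,\dots,a_k]$ is shortest precisely when $d(p_i/q_i)=i$ along its edge path, so its terminal vertex gives $d(p/q)=k$. Hence the integer $n$ appearing in Algorithm~\ref{HT algorithm} is exactly $d(p/q)$, and the algorithm already tells us that $\gamma(K_{p/q})\in\{d(p/q),\,d(p/q)+1\}$, with $\gamma(K_{p/q})=d(p/q)$ if and only if some shortest continued fraction for $p/q$ has an entry that is odd or equal to $\pm2$. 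The proposition therefore follows once I establish the equivalence
$$\text{some shortest continued fraction for } p/q \text{ has an entry that is odd or } \pm2 \iff a_i=\pm2 \text{ for some } i,$$
where $[a_1,\dots,a_{2n}]$ is the even continued fraction.

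Next I would exploit the rigidity of the even expansion. Since every $a_i$ is even and nonzero, it contains no entry equal to $0$ or $\pm1$, so of the three shortening characteristics only the third can apply, and of the alternating strings $(2,-2),(2,-3,2),(-2,3,-3,2),\dots$ the only all-even one is $(2,-2)$ (or $(-2,2)$). Consequently the even continued fraction is already shortest if and only if it contains no $(2,-2)$ substring. This handles one implication of the displayed equivalence in the clean case: if the even fraction has no entry $\pm2$ then it has no such substring, it is the unique shortest expansion, every entry has magnitude at least $4$, and so no shortest expansion has an odd or $\pm2$ entry, whence $\gamma(K_{p/q})=d(p/q)+1$. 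Proving that it is the \emph{unique} shortest expansion, rather than merely one of them, is the first place care is needed; I would argue this from Lemma~\ref{HT depth results} together with the observation that a turn of magnitude at least $4$ pins down both neighboring vertices on the edge path, leaving no room for an alternative geodesic in the Farey graph.

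For the reverse implication I would split on whether the even continued fraction is itself shortest. If it is, then it is a shortest expansion containing a $\pm2$ and we are done; if it is not, it contains a $(2,-2)$ substring, and I would track what the Hirasawa--Teragaito shortening move does to such a substring. The guiding computation is the elementary identity $[2,-2]=1+[-3]$, which shows that collapsing an even $(2,-2)$ bump introduces an odd entry. I would make this precise with the matrix presentation $M(a)=\left(\begin{smallmatrix} a&1\\1&0\end{smallmatrix}\right)$, for which $[a_1,\dots,a_k]$ equals the quotient of the lower-left and upper-left entries of $M(a_1)\cdots M(a_k)$ and the relation $M(a)M(0)M(b)=M(a+b)$ recovers Lemma~\ref{collapsing zeroes}; multiplying out the product across a $(2,-2)$ bump identifies the shortened expansion and exhibits the resulting odd entry. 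Iterating the shortening moves then yields a shortest expansion with an odd or $\pm2$ entry, giving $\gamma(K_{p/q})=d(p/q)$.

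The main obstacle is precisely this analysis of the third shortening move: unlike the collapse of a $0$ in Lemma~\ref{collapsing zeroes}, the move acting on the strings $(2,-2),(2,-3,2),\dots$ alters the neighboring entries and can in principle cascade, so I must verify uniformly that the property ``has an entry that is odd or $\pm2$'' is never destroyed as an expansion is shortened, and dually that it is never created out of an even expansion all of whose entries have magnitude at least $4$. I expect the cleanest way to control this is to run the whole argument in the Farey graph: shortening corresponds to replacing a subpath by a geodesic shortcut, the turning numbers are recovered from the counts of skipped triangles, and the classical three-coloring of the Farey triangulation makes the parities transparent. Here the fraction $p/q$ with $p$ even and $q$ odd lies in one color class and the even expansion is exactly the edge path avoiding the third class, whereas a geodesic that dips into that third class does so precisely at an odd turning number. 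Reading the parity of each turning number, and detecting a turn of magnitude exactly two, from the local structure of the path together with the colors of the vertices it passes through should make the equivalence, and hence the proposition, fall out.
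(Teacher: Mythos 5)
Your overall frame matches the paper's: you use Lemma~\ref{HT depth results} to identify the length of a shortest continued fraction with $d(p/q)$, and you correctly observe that an all-even expansion admits none of the three shortening characteristics except possibly a $(2,-2)$ substring, so that when every $|a_i|\ge 4$ the even expansion is itself shortest. However, both of the claims you flag as ``needing care'' are genuinely load-bearing, and in both places you offer only a sketch. The first gap is self-inflicted: because you recast step 4 of Algorithm~\ref{HT algorithm} existentially ($\gamma=d$ iff \emph{some} shortest expansion has an odd or $\pm 2$ entry), in the case where every $|a_i|\ge 4$ you are forced to rule out any other shortest expansion containing an odd entry, and your proposed remedy --- that the even expansion is the \emph{unique} shortest one because turns of magnitude at least $4$ ``pin down'' the geodesic --- is left unproven. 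It is also unnecessary: the cited algorithm is applied to whichever shortest expansion has been found, so one simply applies its criterion to the even expansion itself and reads off $\gamma(K_{p/q})=2n+1=d(p/q)+1$. That is all the paper does in this case.

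The more serious gap is the case where some $a_i=\pm 2$ and the even expansion is not shortest. Here you propose to analyze Hirasawa and Teragaito's third shortening move, check that collapsing a $(2,-2)$ bump creates an odd entry, and then verify that the property ``has an odd or $\pm 2$ entry'' survives every subsequent shortening --- a verification you yourself identify as the main obstacle and address only with a program (matrix identities, three-coloring of the Farey triangulation), not a proof. None of this machinery is needed, and the missing idea is one you already have in hand: a shortest expansion admits no shortening move, so in particular it has no zero and no $\pm 1$ entries; if all of its entries were even and nonzero then, by the uniqueness of the even continued fraction expansion, it would coincide with $[a_1,\dots,a_{2n}]$, contradicting the fact that it is strictly shorter. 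Hence every shortest expansion contains an odd entry, and Algorithm~\ref{HT algorithm} together with Lemma~\ref{HT depth results} immediately gives $\gamma(K_{p/q})=d(p/q)$. This one-line uniqueness argument is precisely the paper's proof of this case, and it replaces exactly the part of your proposal that remains open.
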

\begin{proof} If $|a_i| \ge 4$ for all $i$, then none of Hirasawa and Teragaito's three shortening moves apply and hence this continued fraction expansion is shortest and defines a shortest path from $1/0$ to $p/q$. Thus by Lemma~\ref{HT depth results}, it follows that $d(p/q)=2n$. It now follows from Algorithm~\ref{HT algorithm} that $\gamma(K_{p/q})=2n+1=d(p/q)+1$. Otherwise, some $a_i=\pm 2$. If the even continued fraction is shortest, then again,  we obtain $\gamma(K_{p/q})=d(p/q)$.  If the even continued fraction expansion is not shortest, then a shortest one must contain an odd entry because the even continued fraction expansion is unique. If the length of a shortest continued fraction is $m$, then by Algorithm~\ref{HT algorithm} we have that $\gamma(K_{p/q})=m$ and, by Lemma~\ref{HT depth results}, we have that $m=d(p/q)$. Hence $\gamma(K_{p/q})=d(p/q)$.
\end{proof}

The depth of a fraction $p/q$ can be determined from a 2-complex $\mathcal T$ associated to an edge path. Suppose
$$\label{path}\frac{1}{0}\to \frac{0}{1} \to \frac{p_1}{q_1} \to \dots \to \frac{p_n}{q_n}=\frac{p}{q}$$
is the edge path determined by the fraction  $p/q=[a_1, a_2, \dots, a_n]$. As already described, at each vertex the path turns through a number of triangles on either the right or left side of the path. Let $\mathcal T$ be the union of these triangles. We can then determine the depth of each vertex of $\cal T$ recursively from the definition.

For example, consider $10/23=[2, 4, -2, 2]$.  This defines the edge path
$$\frac{1}{0} \to \frac{0}{1} \to \frac{1}{2} \to \frac{4}{9} \to \frac{7}{16} \to \frac{10}{23}.$$
The path and its associated 2-complex are shown in  of Figure~\ref{pathwithdepthandauxdata}. Above or below the fraction at each vertex, we display the depth of the fraction. The depths  are computed recursively using Definition~\ref{definition of depth}. In particular, the depth of $10/23$ is 3.  Notice that the edge path defined by $[2,4,-2,2]$ is not shortest; the depth does not increase by $1$ along the last edge of the path. A shorter path is given by $[2,3,3]$, which starts the same, but then goes from $1/2$ to $3/7$ to $10/23$.

\begin{figure}[htbp]
\begin{center}
   \includegraphics[width=3.0in, angle=90]{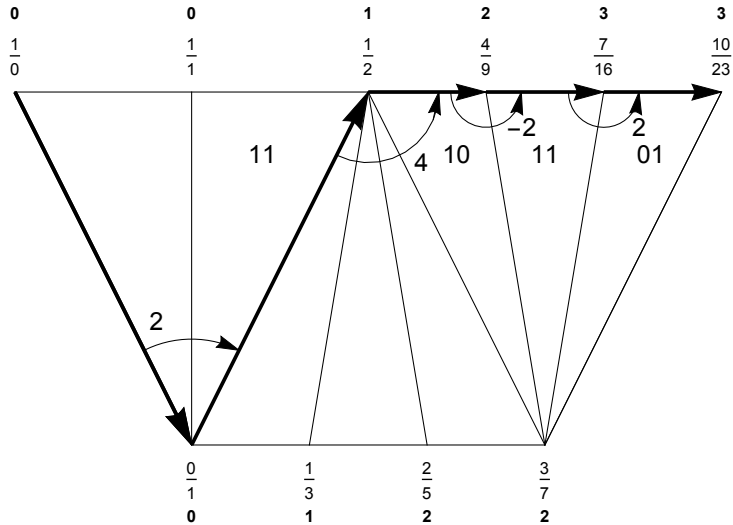} 
\caption{The path associated to $[2, 4, -2, 2]$ with auxiliary data and depths displayed.}
\label{pathwithdepthandauxdata}
\end{center}
\end{figure}

Given $p/q=0+[a_1, a_2, \dots, a_n]$, we will now show that one can compute the depth of  $p/q$ directly from the turning numbers rather than having to draw the 2-complex as in Figure~\ref{pathwithdepthandauxdata} and then compute the depth of every vertex in the 2-complex.
To do this, we introduce the {\bf auxiliary data}, $(d_i, e_i)$ at each turning number $a_i$, as follows. Consider the two edges that terminate at vertex $p_i/q_i$ and which originate at the two parents of this fraction. One of these edges is  in the edge path, but the other is not. Define $d_i$ to be the change in depth along the edge from $p_{i-1}/q_{i-1}$ to $p_i/q_i$ and $e_i$ to be the change in depth along the edge from the other parent. It follows that both $d_i$ and $e_i$ lie in $\{0,1\}$ because each records the change in depth along an edge that points from a parent to a child in the Farey Graph. Thus, the possible values for $(d_i, e_i)$ are, a priori,  limited to $\{(0,0),(0,1),(1,0),(1,1)\}$. However, because our edge paths begin with $1/0\to 0/1$ we will not encounter a triangle with all integral vertices and, therefore, auxiliary data $(0,0)$ will not occur in this context. In Figure~\ref{pathwithdepthandauxdata}, we  have written the pairs $(d_i, e_i)$ in the relevant triangle at each vertex along the path. (In our figures, we have written $11$ for $(1,1)$, etc.).

Clearly, the depth of $p/q$ is the sum $d_1+d_2+\dots+d_n$ because the depth along the path starts at zero and each $d_i$ records the change in depth along one edge in the path.
If $p/q=0+[a_1, a_2, \dots, a_n]$, then we will assume that each $a_i$ is even and nonzero. It is now easy to show that $(d_1, e_1)=(1,1)$ if $|a_1|=2$ and   $(d_1, e_1)=(1,0)$ if $|a_1|\ge 4$. Thus, we can determine the initial auxiliary data from $a_1$. The next result tells us how $(d_i, e_i)$, the sign of $a_ia_{i+1}$,  and $|a_{i+1}|$ determine $(d_{i+1}, e_{i+1})$. This will allow for a recursive calculation of all the auxiliary data, and hence the depth of $p/q$. 

\begin{lemma}\label{aux data depth} Suppose that $p/q=0+[a_1, a_2, \dots, a_{2n}]$ where each $a_i$ is even and nonzero.  If $|a_1|=2$, then  $(d_1, e_1)=(1,1)$; otherwise $(d_1, e_1)=(1,0)$. Then $(d_{i+1}, e_{i+1})$ for $i \ge 1$ is determined recursively by the following table:

\vspace{10pt}
\centerline{\rm
\begin{tabular}{|c|c|c|c|}
\hline
&$|a_{i+1}|=2 \text{ \small \&}$&$|a_{i+1}|=2 \text{ \small \&}$&$|a_{i+1}|\ge 4$\\
$(d_i, e_i)$&$a_ia_{i+1}>0$&$a_ia_{i+1}<0$&\\
\hline
(0,1)&(1,0)&(0,1)&(1,0)\\
(1,0)&(1,1)&(1,1)&(1,0)\\
(1,1)&(1,1)&(0,1)&(1,0)\\
\hline
\end{tabular}
}
\end{lemma}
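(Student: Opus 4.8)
The plan is to read the pair $(d_i,e_i)$ as a record of the depths of the two parents of the vertex $v_i:=p_i/q_i$. Writing $D_i:=d(v_i)$ and letting $B_i$ denote the parent of $v_i$ other than the in-path parent $v_{i-1}$, the definition gives $d(v_{i-1})=D_i-d_i$ and $d(B_i)=D_i-e_i$. Since $D_i=1+\min\left(d(v_{i-1}),d(B_i)\right)$, at least one of $d_i,e_i$ equals $1$, which is exactly why only the three states $(0,1),(1,0),(1,1)$ occur. Concretely, $(1,0)$ says the in-path parent is the shallow one while $B_i$ has depth $D_i$; $(0,1)$ says the reverse; and $(1,1)$ says both parents have depth $D_i-1$. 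To produce $(d_{i+1},e_{i+1})$ I only need the depths of the two parents of $v_{i+1}$: one parent is $v_i$ itself (depth $D_i$), and the other, $B_{i+1}$, is the Farey neighbor of $v_i$ immediately preceding $v_{i+1}$ as one turns from the incoming edge. Thus everything reduces to computing the depths of the first few neighbors of $v_i$ met during the turn.

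First I would record the local structure of the fan of triangles at $v_i$. List the Farey neighbors of $v_i$, in the rotational order of the turn, as $N_0=v_{i-1},N_1,N_2,\dots$, so that $v_{i+1}=N_{|a_{i+1}|}$ and $B_{i+1}=N_{|a_{i+1}|-1}$. The two parents of $v_i$ occupy two consecutive positions in this list and are its two shallowest neighbors; every neighbor lying beyond this parent-pair is a child of $v_i$ and of its predecessor $N_{k-1}$, so $d(N_k)=1+\min\left(D_i,d(N_{k-1})\right)$. An immediate consequence is that, moving away from the parent-pair, the depths climb and stabilize at $D_i+1$ after at most two steps. Hence only the depths of $N_0,N_1,N_2$ (and the identification of which of them is $B_i$) can matter, and the dichotomy $|a_{i+1}|=2$ versus $|a_{i+1}|\ge 4$ is precisely whether $v_{i+1}$ has or has not yet reached this stable value.

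The essential step, and the one I expect to be the main obstacle, is to decide on which side of the incoming edge the parent $B_i$ lies, i.e.\ whether the turn at $v_i$ runs \emph{toward} $B_i$ (so that $N_1=B_i$) or \emph{away} from it (so that $N_1$ is a fresh child and $B_i=N_{-1}$). Here $B_i$ is the third vertex of the triangle on the edge $v_{i-1}v_i$ that was swept while turning at $v_{i-1}$, so $B_i$ lies on the side of the path selected by the sign of $(-1)^i a_i$. Combining this with the stated rule that the turning switches sides exactly when $a_i$ and $a_{i+1}$ have the same sign, I would show that the turn at $v_i$ is toward $B_i$ precisely when $a_ia_{i+1}<0$ and away from $B_i$ precisely when $a_ia_{i+1}>0$. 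This is the geometric heart of the lemma and the only place where the sign of the \emph{product} $a_ia_{i+1}$ (rather than of $a_{i+1}$ alone) enters; it requires careful bookkeeping of the left/right conventions, and I would pin it down by tracking the orientation along the running example $10/23=[2,4,-2,2]$, where at the vertex $7/16$ the other parent $3/7$ indeed lies on the side into which the path next turns.

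With the direction settled, the rest is mechanical. In the \emph{away} case ($a_ia_{i+1}>0$) the relevant depths are $d(N_0)=D_i-d_i$, then $d(N_1)=D_i+1-d_i$ and $d(N_k)=D_i+1$ for $k\ge 2$; reading off $(d_{i+1},e_{i+1})=\left(d(v_{i+1})-D_i,\,d(v_{i+1})-d(B_{i+1})\right)$ gives $(1,d_i)$ when $|a_{i+1}|=2$ and $(1,0)$ when $|a_{i+1}|\ge 4$. In the \emph{toward} case ($a_ia_{i+1}<0$) one has $N_1=B_i$ with $d(N_1)=D_i-e_i$, then $d(N_2)=D_i+1-e_i$ and $d(N_k)=D_i+1$ for $k\ge 3$, which yields $(1-e_i,1)$ when $|a_{i+1}|=2$ and again $(1,0)$ when $|a_{i+1}|\ge 4$. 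Substituting the three admissible states $(d_i,e_i)\in\{(0,1),(1,0),(1,1)\}$ into these formulas reproduces the three rows of the table. Finally I would check the base case directly: $v_1=1/a_1$ has in-path parent $0/1$ of depth $0$ and other parent a fraction of depth $0$ when $|a_1|=2$ and of depth $1$ when $|a_1|\ge 4$, giving $(d_1,e_1)=(1,1)$ and $(1,0)$ respectively.
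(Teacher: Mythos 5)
Your proof is correct and follows essentially the same route as the paper's: both track how depth propagates through the fan of triangles swept at each vertex, using the parent--child depth recursion together with the observation that the turn at $v_i$ runs toward the other parent $B_i$ exactly when $a_ia_{i+1}<0$. The paper carries this out by inspecting figures for each of the three states $(0,1),(1,0),(1,1)$, whereas your neighbor-sequence recursion $d(N_k)=1+\min\left(D_i,d(N_{k-1})\right)$ packages the same check into the uniform formulas $(1,d_i)$, $(1-e_i,1)$, and $(1,0)$.
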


Before proving this lemma, let us consider an example to illustrate how the lemma works. Consider $92/125=[2, -2, 2, 4, -4, 2]$. According to Lemma~\ref{aux data depth}, $(d_1, e_1) = (1,1)$ because $a_1=2$. Since $a_1a_2=-4$, we look in the last row of the table and the column headed by $|a_{i+1}|=2 $ and $a_ia_{i+1}<0$, to find that $(d_2, e_2)=(0,1)$. Continuing in this way, we see that the  associated sequence of auxiliary data is
$(1,1), (0,1), (0,1), (1,0), (1,0), (1,1)$ 
from which we conclude that the depth of $92/125$ is $1+0+0+1+1+1=4$.

\begin{proof}
We derive the various outcomes in the table for the case where $(d_i,e_i)=(1,1)$ by using Figure~\ref{aux data for 11}. Here we have imagined the last triangle in the 2-complex before advancing one more edge in the path. The last triangle is assumed to have auxiliary data $(1,1)$ and so the depths of its vertices are labeled accordingly, in this case, $d$, $d-1$, and $d-1$. The turning due to the sign of $a_ia_{i+1}$ and the magnitude of $a_{i+1}$, then leads us to the next value of auxiliary data.  Note  that both this figure and its mirror image are needed in general. Similar arguments (and figures) handle the other two cases, where $(d_i,e_i)$ is either $(0,1)$ or $(1,0)$.
\end{proof}
\begin{figure}[ht] 
  \centering
   \includegraphics[width=2.5in,trim=0 100 0 0, clip]{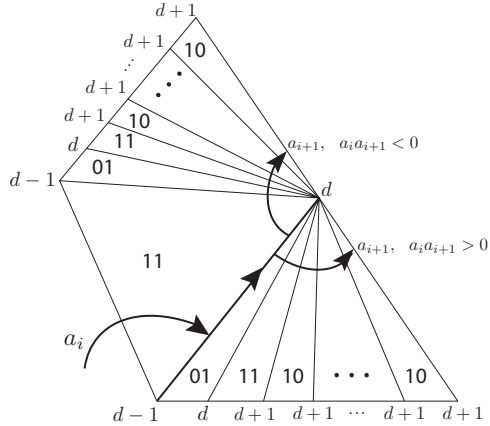} 
   \caption{Determining $(d_{i+1},e_{i+1})$ from $(d_i,e_i)=(1,1)$.}
\label{aux data for 11}
\end{figure}

Considering auxiliary data provides a straightforward proof of the following result which will be used in the proof of Theorem~\ref{main theorem}.\\

\begin{lemma}\label{reversing result} Suppose $\bf a$ is a vector of all even, nonzero  entries. Then
$d({\bf a})=d(- {\bf a})$ and $d({\bf a})=d({\bf a}^{-1}). $
\end{lemma}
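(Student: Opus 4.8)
The plan is to handle the two equalities by quite different means: $d(\mathbf a)=d(-\mathbf a)$ falls straight out of the auxiliary-data recursion, whereas $d(\mathbf a)=d(\mathbf a^{-1})$ is the real content and is cleanest to extract from the shortest-path interpretation of depth rather than from the recursion. For the negation identity I would note that the computation in Lemma~\ref{aux data depth} reads $\mathbf a$ only through $|a_1|$, the magnitudes $|a_{i+1}|$, and the signs of the consecutive products $a_ia_{i+1}$. Passing from $\mathbf a$ to $-\mathbf a$ fixes every $|a_i|$ and fixes every product $a_ia_{i+1}=(-a_i)(-a_{i+1})$, so the initial pair $(d_1,e_1)$ and every transition in the table are unchanged; the two auxiliary-data sequences therefore agree termwise and $d(\mathbf a)=\sum_i d_i=d(-\mathbf a)$.

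For the reversal identity I would resist pushing the recursion through directly, since the sequence $(d_i,e_i)$ for $\mathbf a^{-1}$ is in general \emph{not} the reverse of the one for $\mathbf a$ — for example $\mathbf a=(2,-2,4)$ and $\mathbf a^{-1}=(4,-2,2)$ give $d$-sequences $(1,0,1)$ and $(1,1,0)$ — so the equality of the sums is not a reindexing. Instead I would use Lemma~\ref{HT depth results} to reinterpret $d(f)$, for a rational $f$, as the least number $m$ of turning numbers in an expansion $f=r+[b_1,\dots,b_m]$ with every $|b_i|\ge2$, i.e. the length of a shortest monotone edge path from $\tfrac10$ to $f$. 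I would also record the two invariances $d(f+1)=d(f)$ and $d(-f)=d(f)$, which hold because $x\mapsto x+1$ and $x\mapsto -x$ are automorphisms of the Farey graph fixing $\mathbb Z\cup\{\tfrac10\}$ setwise and hence preserving the parent structure that defines depth.

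The algebraic engine is the classical fact that reversing a continued fraction preserves the denominator and inverts the numerator modulo it: if $[b_1,\dots,b_m]=p_0/q$ then $[b_m,\dots,b_1]=\hat p/q$ with $\hat p\,p_0\equiv\pm1\pmod q$, which one sees by writing the expansion as a product of the symmetric matrices $\left(\begin{smallmatrix}0&1\\1&b_i\end{smallmatrix}\right)$ and transposing. Applying this to $\mathbf a$ itself shows that, with $[\mathbf a]=p/q$ and $[\mathbf a^{-1}]=p'/q$, one has $p'\equiv\pm p^{-1}\pmod q$. Now choose a shortest expansion $p/q=r+[b_1,\dots,b_m]$ realizing $m=d(p/q)$; reversing its bracket gives a length-$m$ expansion of a fraction $\hat p/q$ with $\hat p\equiv\pm p^{-1}\equiv\pm p'\pmod q$, so $d(\hat p/q)\le m$, and the two invariances give $d(\hat p/q)=d(p'/q)$. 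Hence $d(\mathbf a^{-1})=d(p'/q)\le d(p/q)=d(\mathbf a)$, and the same argument with $\mathbf a$ and $\mathbf a^{-1}$ interchanged yields the reverse inequality, so the depths are equal.

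The step I expect to be the main obstacle is making this reversal argument airtight. Two points require care: first, that $d$ genuinely equals the shortest monotone-path length and not the full Farey-graph distance from $\tfrac10$ (the integers all sit at distance $1$ from $\tfrac10$ yet have depth $0$, so the distinction is real); and second, that the minimal expansion one reverses need not be the even expansion $\mathbf a$, and the reversed fraction $\hat p/q$ matches $[\mathbf a^{-1}]$ only up to an integer translation and a sign — which is precisely why the invariances $d(f+1)=d(f)$ and $d(-f)=d(f)$ must be established first and are what close the argument.
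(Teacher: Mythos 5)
Your proof is correct, but for the reversal identity it takes a genuinely different route from the paper. Your treatment of $d(\mathbf a)=d(-\mathbf a)$ coincides with the paper's (the auxiliary-data recursion of Lemma~\ref{aux data depth} sees only $|a_1|$, the magnitudes $|a_{i+1}|$, and the signs $a_ia_{i+1}$, all of which negation preserves). For $d(\mathbf a)=d(\mathbf a^{-1})$, however, the paper argues topologically: $\mathbf a$ and $\mathbf a^{-1}$ give 4-plat diagrams related by a $180$-degree rotation, hence the same knot and the same crosscap number, and since $\mathbf a$ has an entry $\pm 2$ exactly when $\mathbf a^{-1}$ does, Proposition~\ref{depth or depth plus one} forces the depths to agree. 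You instead stay entirely inside the combinatorics of the Farey graph: depth equals shortest-expansion length (via Lemma~\ref{HT depth results}), the transposition trick with the symmetric matrices $\left(\begin{smallmatrix}0&1\\1&b_i\end{smallmatrix}\right)$ shows reversal preserves the denominator and inverts the numerator modulo it, and the invariances $d(f+1)=d(f)$, $d(-f)=d(f)$ (which do hold, since translation and negation preserve mediants and fix $\mathbb Z\cup\{\tfrac10\}$) convert the reversed fraction back to $[\mathbf a^{-1}]$. Your route is longer but buys something real: it is independent of the knot-theoretic input hiding in Proposition~\ref{depth or depth plus one} (Bessho and Hirasawa--Teragaito's crosscap theorems), and it actually establishes the stronger arithmetic fact that $d(p/q)=d(p'/q)$ whenever $pp'\equiv\pm1\pmod q$. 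You were also right to flag, with a correct counterexample, that the naive approach — reversing the auxiliary-data sequence termwise — fails; that is precisely why the paper sidesteps the recursion here, and why some non-local argument (yours or the paper's) is needed.
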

\begin{proof} First note that the auxiliary data associated to $-{\bf a}$ is exactly the same as that associated to ${\bf a}$. Thus both have the same depth.
Next, it is well-known that both ${\bf a}$ and ${\bf a}^{-1}$ correspond to the same 2-bridge knot $K$. This is because the 4-plat diagrams determined by ${\bf a}$ and ${\bf a}^{-1}$, respectively, are related by a $180$ degree rotation. Thus both ${\bf a}$ and ${\bf a}^{-1}$ determine knots with the same crosscap number. Because one has an entry of $\pm 2$ if and only if the other one does, Proposition~\ref{depth or depth plus one} implies they must have the same depth.
\end{proof} 

Returning to our definition of the auxiliary data, we introduce two  definitions that are slight variations. Given a vector $\bf a$ all of whose entries are even and nonzero, we can get a different sequence of auxiliary data by still using the table in Lemma~\ref{aux data depth} but by starting with different initial data. Define $d({\bf a}, 01)$ to be the depth derived by starting with initial data $(0,1)$, computing all subsequent auxiliary data according to the table in Lemma~\ref{aux data depth}, and then summing the first entry of each data pair. Similarly, let $d({\bf a}, 10)$ be the analogous depth obtained by using $(1,0)$ as the initial data. 

\begin{lemma}\label{false depths} Suppose $\bf a$ is a vector of all even, nonzero  entries. Then
\begin{itemize}
\item $d({\bf a}, 11)-d({\bf a}, 01)\in \{0,1\}$, 
\item $d({\bf a}, 11)-d({\bf a}, 10)\in \{-1,0\}$, and
\item $d({\bf a}, 10)-d({\bf a}, 01)\in \{0,1,2\}.$
\end{itemize}
\end{lemma}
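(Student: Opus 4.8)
The plan is to read the recursion in Lemma~\ref{aux data depth} as a deterministic finite-state automaton whose three states are the data pairs $(0,1)$, $(1,0)$, and $(1,1)$, and whose input at step $i+1$ selects one of the three columns of the table according to whether $|a_{i+1}|\ge 4$ or $|a_{i+1}|=2$ (and in the latter case according to the sign of $a_i a_{i+1}$). The three quantities $d(\mathbf a,11)$, $d(\mathbf a,01)$, and $d(\mathbf a,10)$ are then the running sums of the \emph{first} coordinates of the states visited along three runs of this \emph{same} automaton on the \emph{same} input string (determined by $\mathbf a$), the runs differing only in their initial state, namely $(1,1)$, $(0,1)$, and $(1,0)$. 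Since the first coordinate is $1$ for both $(1,0)$ and $(1,1)$ and is $0$ only for $(0,1)$, each visited state contributes $1$ to its sum unless it equals $(0,1)$.

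This sets up a coupling argument. I would run the three trajectories in lockstep, tracking the joint state $(u_i,v_i,w_i)\in\{(0,1),(1,0),(1,1)\}^3$ together with the differences of partial sums
$$D_1=d(\mathbf a,11)-d(\mathbf a,01),\qquad D_2=d(\mathbf a,11)-d(\mathbf a,10),\qquad D_3=d(\mathbf a,10)-d(\mathbf a,01),$$
of which only two are independent, since $D_1=D_2+D_3$. The first structural observation, visible in the last column of the table, is that a transition with $|a_{i+1}|\ge 4$ sends every state to $(1,0)$; hence immediately after any such step all three trajectories coincide and then stay equal, so the three differences freeze and subsequent steps contribute equally. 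The same freezing occurs whenever the trajectories happen to coincide under the other two columns. Consequently $D_1,D_2,D_3$ are determined solely by the initial joint state $((1,1),(0,1),(1,0))$ and by the prefix of transitions in which every $|a_j|=2$, up to the first coincidence.

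It then suffices to perform a finite reachability computation from $((1,1),(0,1),(1,0))$ under the two columns with $|a_{i+1}|=2$. I expect the set of reachable joint states to close up after a handful of states, and --- this is the crux --- for the accumulated triple $(D_1,D_2,D_3)$ to be an \emph{invariant} of the joint state rather than depending on the path used to reach it; in particular every directed cycle in the reachable joint-state graph has zero net contribution, which is what keeps the differences bounded. Establishing this well-definedness is the main obstacle, since without it the running differences could a priori grow along a loop. Once it is in hand, one simply lists the attainable triples: my computation shows they are exactly $(D_1,D_2,D_3)\in\{(0,0,0),(1,0,1),(1,-1,2)\}$, and all three bullet points follow at once, namely $D_1\in\{0,1\}$, $D_2\in\{-1,0\}$, and $D_3\in\{0,1,2\}$.

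Finally I would check the two places where bookkeeping errors are easy. The initial data is forced to be $(1,1)$, $(0,1)$, $(1,0)$ regardless of $|a_1|$, so $a_1$ enters only through the sign of $a_1a_2$ in the first transition; and the merging step itself contributes equally to all three running sums, so it cannot violate the bounds. A cleaner variant that avoids signed differences is to note that the running sum of a trajectory equals its length minus its number of visits to the state $(0,1)$, so that $D_1,D_2,D_3$ are differences of visit-counts to $(0,1)$; the same finite analysis then bounds these count differences directly.
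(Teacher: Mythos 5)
Your proposal is correct, and it takes a genuinely different route from the paper. The paper proves the first two bullets by induction on the length of $\mathbf a$: writing ${\bf a}'=(a_2,\dots,a_n)$, it expresses each of $d({\bf a},11)$, $d({\bf a},10)$, $d({\bf a},01)$ as its first-coordinate contribution plus a depth of the tail ${\bf a}'$ with the appropriate initial data (one case per output column of the table; e.g.\ when $|a_2|=2$ and $a_1a_2>0$ one gets $d({\bf a},11)=1+d({\bf a}',11)$, $d({\bf a},10)=1+d({\bf a}',11)$, $d({\bf a},01)=0+d({\bf a}',10)$), and then bounds the differences by the inductive hypothesis; the third bullet follows from the first two by the same telescoping identity $D_3=D_1-D_2$ that you note. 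Your coupled-automaton argument instead runs the three trajectories forward and performs a reachability computation on joint states. The point you flag as the crux --- that the accumulated triple is an invariant of the joint state, i.e.\ every cycle in the reachable joint-state graph has zero net contribution --- does hold: the breadth-first search from $((1,1),(0,1),(1,0))$ under the two magnitude-$2$ columns closes up on nine unmerged joint states, each carrying a unique triple, and your list $\{(0,0,0),(1,0,1),(1,-1,2)\}$ of attainable triples is exactly right (note also that strictly speaking you do not need invariance as a separate principle; it is simply what the finite closure check on pairs (joint state, triple) verifies, and finiteness of that closed set already gives the bounds). Comparing the two: the paper's induction is shorter and dodges the path-invariance issue entirely, since the inductive hypothesis bounds the differences with no reference to how the tail is traversed; your computation is heavier (roughly eighteen transitions to check) but yields strictly more information --- the exact set of attainable difference triples, hence sharpness of all three bounds, plus the structural fact that any entry of magnitude at least $4$ resynchronizes all three trajectories at $(1,0)$, so only an initial all-$\pm 2$ prefix of $\mathbf a$ can ever separate them.
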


\begin{proof} First note that the third conclusion follows from the first two because
$$d({\bf a}, 10)-d({\bf a}, 01)=d({\bf a}, 10)-d({\bf a}, 11)+d({\bf a}, 11)-d({\bf a}, 01).$$
To prove the first two assertions, we induct on the length of $\bf a$. If ${\bf a}=(a_1)$, the result is trivial. Now assume that ${\bf a}$ has length bigger than 1 and that the result is true for any shorter vector. Let ${\bf a}=(a_1, a_2, \dots, a_n)$ and ${\bf a}'=(a_2, a_3, \dots, a_n)$. We consider three cases, each corresponding to one of the three (output) columns of Lemma~\ref{aux data depth}.

For the first case, if $|a_2|=2$ and $a_1 a_2>0$, then by considering the three possible initial values for the auxiliary data $(d_1,e_1)$ we obtain that \\
$d({\bf a}, 11) =1+d({\bf a}', 11)$, $d({\bf a}, 10)=1+d({\bf a}', 11)$, and
$d({\bf a}, 01)=0+d({\bf a}', 10)$. Forming the appropriate differences and using the inductive hypothesis immediately gives the result. The proofs of the remaining two cases are similar.
\end{proof}

Returning to Algorithm~\ref{HT algorithm}, we now offer, in passing, a more direct method of finding a shortest continued fraction expansion for any fraction $p/q$. Given $p/q$, first note that it is straightforward to find its two parents in the Farey graph. To do this, first use the Euclidean Algorithm to express the greatest common divisor of $p$ and $q$ as $\pm 1=a q-b p$. Next, let $c=p-a$ and $d=q-b$. Thus the mediant of $a/b$ and $c/d$ is equal to $p/q$. Moreover, $ad-bc=a(q-b)-b(p-a)=aq-bp=\pm 1$. Thus, the fractions $a/b$ and $c/d$ are connected by an edge in the Farey graph. The idea now, starting with $p/q$, is to repeatedly move to the parent with smallest denominator.
This provides a simple algortihm to find a shortest path from $1/0$ to $p/q$.

\begin{algorithm}[]
\label{grandparentpath}
To find a shortest path in the Farey graph from $p/q$ to $1/0$:
\begin{enumerate}
\item If $\frac{p}{q}=\frac{n}{1}$, then a shortest path is $\frac{n}{1}\rightarrow \frac{1}{0}$.

    \item If $\frac{p}{q}= \frac{2k+1}{2}$, then a shortest path is $\frac{p}{q} \rightarrow \frac{k}{1} \rightarrow \frac{1}{0}$.
    \item If $q > 2$, then choose the parent with smallest denominator as the next vertex and repeat until we reach a denominator of $1$ or $2$. Then finish as in (1) or (2), respectively.
\end{enumerate}
\end{algorithm}

\noindent Lemma~\ref{HT depth results} (which is proven in \cite{Hirasawa_Teragaito:2006}) can be used to show that this generates a shortest path from $1/0$ to $p/q$. We leave the details to the reader. 

The following lemma will be used in the proof of Theorem~\ref{main theorem}.

\begin{lemma}\label{isolation lemma} Suppose ${\bf a}$ and ${\bf b}$ are both vectors of all even, nonzero entries and that $c$ is an  integer with $|2c| \ge4$. If ${\bf A}=({\bf a}, 2c, {\bf b})$, then
$$d({\bf A}) = 1+  d({\bf a})+d({\bf b}).$$
\end{lemma}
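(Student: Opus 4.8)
The plan is to compute $d(\mathbf A)$ directly from the auxiliary data of Lemma~\ref{aux data depth}, exploiting the fact that $d(\mathbf A)$ is the sum of the first coordinates $d_i$ of the auxiliary data pairs along $\mathbf A$. Write $\mathbf a = (a_1,\dots,a_m)$ and $\mathbf b = (b_1,\dots,b_\ell)$, so that $\mathbf A = (a_1,\dots,a_m,\,2c,\,b_1,\dots,b_\ell)$ has all even nonzero entries and $d(\mathbf A)=\sum_{i=1}^{m+1+\ell} d_i$. I would then split this sum into three blocks — the positions coming from $\mathbf a$, the single position of the connector $2c$, and the positions coming from $\mathbf b$ — and show they contribute $d(\mathbf a)$, $1$, and $d(\mathbf b)$ respectively.

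First I would observe that the recursion in Lemma~\ref{aux data depth} is local: the data at a position is determined by the data at the previous position, the magnitude of the current entry, and the sign of the product of the current and previous entries. Hence for positions $1$ through $m$ the computation inside $\mathbf A$ is identical, term by term, to the computation for $\mathbf a$ alone, since the initial data depends only on $|a_1|$ and all relevant products of consecutive entries lie within $\mathbf a$. This shows the first block contributes exactly $d(\mathbf a)$.

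The crux is the connector $2c$. Since $|2c|\ge 4$, the last column of the table in Lemma~\ref{aux data depth} forces the auxiliary data at position $m+1$ to be $(1,0)$, independent of the data at position $m$ and of the sign of $a_m\cdot 2c$; this is exactly where the hypothesis $|2c|\ge 4$ is essential, as a connector of magnitude $2$ would not reset the state. In particular this position contributes $d_{m+1}=1$. The key remaining observation is that the state $(1,0)$ behaves as a \emph{fresh start}: reading off the table, applying it to any even nonzero entry from state $(1,0)$ yields $(1,1)$ when the entry has magnitude $2$ and $(1,0)$ when it has magnitude $\ge 4$, regardless of the sign of the product — and this is precisely the initial-data rule that opens the computation of $d(\mathbf b)$. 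Consequently the data computed for $b_1$ in $\mathbf A$ (from the reset state $(1,0)$ via the table) agrees with the initial data assigned to $b_1$ when computing $d(\mathbf b)$ on its own; since all subsequent transitions along $\mathbf b$ depend only on these data and on products internal to $\mathbf b$, the third block reproduces the computation of $d(\mathbf b)$ verbatim and contributes $d(\mathbf b)$. Adding the three blocks gives $d(\mathbf A)=d(\mathbf a)+1+d(\mathbf b)$.

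I expect the main obstacle to be the bookkeeping in the last step: one must verify carefully that applying the table from state $(1,0)$ reproduces the special initial-data rule of Lemma~\ref{aux data depth}, and in particular that the outcome is insensitive both to the sign of the product with the connector and to the value of $c$. Once this ``$(1,0)$ acts as an initializer'' property is isolated, the decomposition of the sum is routine.
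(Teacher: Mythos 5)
Your proposal is correct and follows essentially the same argument as the paper: both decompose the auxiliary-data sum into the $\mathbf a$ block, the connector, and the $\mathbf b$ block, use $|2c|\ge 4$ to force the state $(1,0)$ at the connector, and verify from the table in Lemma~\ref{aux data depth} that $(1,0)$ reproduces the initial-data rule for $b_1$, so $\mathbf b$ contributes exactly $d(\mathbf b)$. Your explicit emphasis that $(1,0)$ acts as a ``fresh start'' is precisely the observation the paper's proof relies on.
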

\begin{proof} First note that the vector $\bf a$ contributes $d({\bf a})$ to the depth of $\bf A$. Let $(d_i,e_i)$ be the auxiliary data of $\bf A$ at $2c$. Because $|2c| \ge 4$, it follows from Lemma~\ref{aux data depth}, that $(d_i ,e_i) =(1,0)$.  Thus, the entry $2c$ contributes $+1$ to the depth of $\bf A$. We also have  that 
$$ (d_{i+1},e_{i+1})=\left \{\begin{array}{cc} (1,1)& \text{if $|b_{1}|=2$,}\\ (1,0) & \text{if $|b_{1}|\ge 4$.}\end{array}\right. 
$$
Thus, the first element $b_1$ of $\bf b$ has the same auxiliary data as it would if only its depth were being computed. So $\bf b$ contributes $d({\bf b})$ to the depth of $\bf A$. Therefore, 
$d({\bf A}) = 1+  d({\bf a})+d({\bf b})$.
\end{proof}

The last result needed for Theorem~\ref{main theorem} is the following.

\begin{lemma} \label{depth of adding one more tile}Suppose ${\bf a}$ and ${\bf b}$ are both vectors of all even, nonzero entries and that $c$ is any integer. Let ${\bf A}=({\bf a},  2c, {\bf b})$, where we further assume that if $c=0$, then the last entry of ${\bf a}$ is equal to the first entry of ${\bf b}$. Then
$$d({\bf A})\ge d({\bf a})+d({\bf b})-1.$$
\end{lemma}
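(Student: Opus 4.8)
The plan is to split on the size of the connector $2c$, since an entry of magnitude $\ge 4$ forces the auxiliary data to $(1,0)$ (rigid behavior), while a connector of magnitude $2$ or a zero entry is delicate. When $|2c|\ge 4$ the statement is immediate from Lemma~\ref{isolation lemma}, which in fact gives the stronger equality $d(\mathbf A)=1+d(\mathbf a)+d(\mathbf b)$. So the real content lies in the cases $c=\pm1$ (connector $\pm2$) and $c=0$.

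For $c\ne 0$, I would use that the auxiliary data is computed by a left-to-right recursion (Lemma~\ref{aux data depth}), so the depth of $\mathbf A=(\mathbf a,2c,\mathbf b)$ decomposes into a contribution from $\mathbf a$, from the connector, and from $\mathbf b$. Writing $\mathbf a=(a_1,\dots,a_m)$ and $\mathbf b=(b_1,\dots,b_\ell)$, the entries of $\mathbf a$ see exactly the data they would see in isolation, contributing $d(\mathbf a)$; the connector contributes its first auxiliary coordinate $d_{m+1}\ge 0$; and the entries of $\mathbf b$ contribute a \emph{false depth} $d(\mathbf b, xy)$, where $(x,y)$ is whatever data $b_1$ receives across the connector. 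This gives $d(\mathbf A)=d(\mathbf a)+d_{m+1}+d(\mathbf b,xy)$, and it remains to bound the false-depth term below by $d(\mathbf b)-1$. Here I would split on $|b_1|$: if $|b_1|\ge 4$ the last column of the table forces $(x,y)=(1,0)$, which is the natural starting data of $\mathbf b$, so the term equals $d(\mathbf b)$ and there is no loss at all; if $|b_1|=2$ the natural data is $(1,1)$, so $d(\mathbf b)=d(\mathbf b,11)$, and Lemma~\ref{false depths} gives $d(\mathbf b,xy)\ge d(\mathbf b,11)-1=d(\mathbf b)-1$ for each of the three possible values $01,10,11$ of $(x,y)$. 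Either way $d(\mathbf A)\ge d(\mathbf a)+d(\mathbf b)-1$.

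For $c=0$ the table does not apply to a zero entry, so I would instead use the hypothesis $a_m=b_1$ together with Lemma~\ref{collapsing zeroes} to remove the zero, obtaining $[\mathbf A]=[\mathbf a',\,2a_m,\,\mathbf b']$ with $\mathbf a'=(a_1,\dots,a_{m-1})$ and $\mathbf b'=(b_2,\dots,b_\ell)$. The point of the hypothesis is precisely that $a_m+b_1=2a_m$ has magnitude $\ge 4$, so Lemma~\ref{isolation lemma} applies and yields $d(\mathbf A)=1+d(\mathbf a')+d(\mathbf b')$. Finally I would compare $\mathbf a'$ with $\mathbf a$ and $\mathbf b'$ with $\mathbf b$: deleting the last entry of a vector changes its depth by $0$ or $1$ (again by the recursion), so $d(\mathbf a')\ge d(\mathbf a)-1$; and deleting the \emph{first} entry of $\mathbf b$ reduces to deleting a last entry after reversing, so $d(\mathbf b')\ge d(\mathbf b)-1$ using $d(\mathbf b)=d(\mathbf b^{-1})$ from Lemma~\ref{reversing result}. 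Combining, $d(\mathbf A)\ge 1+(d(\mathbf a)-1)+(d(\mathbf b)-1)=d(\mathbf a)+d(\mathbf b)-1$.

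I expect the $c=\pm1$ case to be the main obstacle, and within it the crucial observation is that a magnitude-$2$ first entry $b_1$ is the only way the starting data of $\mathbf b$ can be pushed away from its natural value; this is exactly the situation governed by the difference bounds of Lemma~\ref{false depths}, which appear to have been set up for this purpose. A secondary, purely bookkeeping, point is the degenerate cases (empty $\mathbf a$ or $\mathbf b$, or $m=1$ / $\ell=1$ when $c=0$), which I would handle with the conventions $d(\varnothing)=0$ and $d$ of a single even nonzero entry equal to $1$, checking they are consistent with the decomposition and with the two applications of Lemma~\ref{isolation lemma}.
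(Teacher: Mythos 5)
Your proposal is correct and follows essentially the same route as the paper's proof: the case $|2c|\ge 4$ via Lemma~\ref{isolation lemma}, the case $c=0$ by collapsing the zero connector (using $a_m=b_1$ to get a merged entry of magnitude $\ge 4$) and invoking Lemma~\ref{isolation lemma} together with Lemma~\ref{reversing result}, and the case $2c=\pm 2$ by decomposing the depth as $d(\mathbf a)+d'+d(\mathbf b,xy)$ and bounding the false-depth term with Lemma~\ref{false depths}. If anything, your write-up of the $2c=\pm2$ case is slightly more complete, since the paper details only the $|b_1|=2$, $ij=10$ subcase and leaves the rest as "similar."
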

\begin{proof} We break the proof into multiple cases.

\noindent{\bf Case I:} $c=0$. \\
Let ${\bf A}'$ be obtained from ${\bf A}$ by removing $2c$ as in Lemma~\ref{collapsing zeroes}. If 
${\bf a}=(a_1,a_2,\dots, a_m)$ and ${\bf b}=(a_m, b_2, \dots, b_n)$, then ${\bf A}'=(a_1,a_2, \dots, a_{m-1}, 2a_m, b_2,\dots, b_n)$. We also have $d({\bf A})=d({\bf A}')$ since these vectors determine the same fraction. Now $|2a_m|\ge 4$ because $|a_m|\ge 2$.  Therefore,  by Lemma \ref{isolation lemma}, we have
$$d({\bf A}) = d({\bf A}') = 1 + d((a_1,a_2,\dots, a_{m-1})) + d((b_2,\dots, b_{n-1}, b_n)).$$
Removing the last entry from $\bf a$ reduces its depth by at most $1$, and Lemma~\ref{reversing result} implies that removing the first entry from $\bf b$ also reduces its depth by at most 1. Therefore
$d({\bf A}) \ge d({\bf a})+d({\bf b})-1.$\\

\noindent{\bf Case II:} $|2c|\ge 4$. \\
Lemma~\ref{isolation lemma} implies that $d({\bf A}) = 1+d({\bf a})+d({\bf b}) \ge d({\bf a})+d({\bf b})-1$.\\

\noindent{\bf Case III:} $2c=\pm 2$. \\
The sequence of auxiliary data for $\bf A$ begins with the sequence of auxiliary data for $\bf a$ then has the auxiliary data for the entry $2c$, which we denote $(d',e')$, and then has auxiliary data some $(i,j) \in \{(0,1), (1,0), (1,1)\}$ for the first entry $b_1$ of $\bf b$. If we were to compute $d({\bf b})$ then the initial auxiliary data for $b_1$ should be $(1,1)$ if $|b_1|=2$ or $(1,0)$ if $|b_1|\ge 4$. First consider the case where $|b_1|=2$, Now, 
\begin{align*}
d({\bf A})&=d({\bf a})+d'+d({\bf b}, ij)\\
&=d({\bf a})+d'+d({\bf b})+d({\bf b}, ij)-d({\bf b}, 11).
\end{align*}
If $ij=10$, then by Lemma~\ref{false depths}, the difference of the last two terms is either $-1$ or $0$, and because $d'$ is equal to either $0$ or $1$, we obtain the desired result. Similar arguments apply if  $ij \neq 10$  or $|b_1| 
\ge 4$. 
\end{proof}

Lemmas~\ref{aux data depth}--\ref{depth of adding one more tile} allow us to compare the depth of a vector $\bf a$ to the depth of a vector $\bf b$ that parses with respect to $\bf a$.  For example, consider ${\bf a} = [2,2,-2,2]$ which has auxiliary data $(1,1), (1,1), (0,1), (0,1)$.  Note that $d({\bf a}) = d({\bf a}^{-1})=2$ but that the auxiliary data for ${\bf a}^{-1}$ is $(1,1), (0,1), (0,1), (1,0)$. Different types of connectors in a vector that parses with respect to $\bf a$ change the auxiliary data and depth in ways reflected by the lemmas. For example, ${\bf b} = ({\bf a}, -2, {\bf a}^{-1})$ has auxiliary data $(1,1),(1,1),(0,1),(0,1),(0,1),(0,1),(0,1),(0,1),(1,0)$. This data starts with the auxiliary data for $\bf a$ but ends with data for ${\bf a}^{-1}$ that assumes initial data of $(0,1)$ (thus contributing $d({\bf a}^{-1},01)$ to the computation of depth). Since $d({\bf b})= 3 = d({\bf a}) + d({\bf a}^{-1}) -1$, this example illustrates that the inequality in Lemma~\ref{depth of adding one more tile} is sharp.

We are now prepared to prove Theorem~\ref{main theorem}.

\setcounter{theorem}{0}
\begin{theorem}
Suppose $K$ and $J$ are 2-bridge knots with $K>J$. Then $$\gamma(K)\ge 3 \gamma(J) -4.$$ 
\end{theorem}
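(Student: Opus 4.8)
The plan is to translate everything into even continued fractions and reduce the statement to a comparison of depths. Write $J=K_r$ with $r=[\mathbf a]$, where $\mathbf a=(a_1,\dots,a_m)$ is the even continued fraction of $J$; then every entry satisfies $|a_i|\ge 2>1$, so Theorem~\ref{ORS theorem and converse} applies. Since $K>J$ there is $\tilde r=[\mathbf b]$ with $K=K_{\tilde r}$ and $\mathbf b$ parsing with respect to $\mathbf a$ using an odd number $n$ of tiles. I would first discard the degenerate case in which $J$ is the unknot (the inequality is then vacuous), so that from now on $d(\mathbf a)\ge 1$ and $m\ge 2$. A single tile forces $\tilde r=\pm r$, hence $K=J$ (mirror images being identified), contradicting $K>J$; as $n$ is odd this gives $n\ge 3$.

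The core estimate is a lower bound for $d(\mathbf b)$. By Lemma~\ref{reversing result} each of the $n$ tiles, being $\pm\mathbf a$ or $\pm\mathbf a^{-1}$, has depth exactly $d(\mathbf a)$. I would then prove by induction on $n$ that
\[ d(\mathbf b)\ \ge\ n\,d(\mathbf a)-(n-1), \]
peeling the last tile $T_n$ off the right and applying Lemma~\ref{depth of adding one more tile} to $(\mathbf b^-,2c_{n-1},T_n)$, where $\mathbf b^-$ is the parsing of the first $n-1$ tiles. The one technical point is that Lemma~\ref{depth of adding one more tile} wants all--even--nonzero arguments, whereas $\mathbf b^-$ may contain zero connectors; I would first collapse those interior zeros using Lemma~\ref{collapsing zeroes}, which changes neither the fraction $[\mathbf b]$ nor the depth and leaves the last entry of $\mathbf b^-$, hence the junction at $2c_{n-1}$, untouched. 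Since $d(\mathbf a)\ge 1$ and $n\ge 3$ this yields $d(\mathbf b)\ge 3\,d(\mathbf a)-2$.

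Converting to crosscap number via Proposition~\ref{depth or depth plus one} splits the argument according to whether the even continued fraction of $J$ has an entry of magnitude $2$. If some $|a_i|=2$, then $\gamma(J)=d(\mathbf a)$, and since $\gamma(K)\ge d(\mathbf b)$ always, the displayed bound already gives $\gamma(K)\ge 3\gamma(J)-2$, stronger than required. I expect the genuine obstacle to be the opposite case, $|a_i|\ge 4$ for all $i$, where $\gamma(J)=d(\mathbf a)+1$ and the black--box depth inequality only delivers $3\gamma(J)-5$, one short of the claim.

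To recover the missing unit I would compute the auxiliary data of the even continued fraction $\mathbf b'$ of $K$ (obtained from $\mathbf b$ by collapsing zeros, so $d(\mathbf b)=d(\mathbf b')$) directly. When every tile entry has magnitude at least $4$, Lemma~\ref{aux data depth} forces $d_i=1$ at every position of $\mathbf b'$: entries of magnitude $\ge 4$ always return $(1,0)$, a surviving $\pm 2$ connector sits immediately after a tile entry with data $(1,0)$ and returns $(1,1)$, and each collapsed zero connector merges two equal tile endpoints into an entry of magnitude $\ge 8$. Hence $d(\mathbf b)$ equals the length $nm+(n-1)-2z$ of $\mathbf b'$, where $z$ counts the zero connectors, and $\mathbf b'$ has an entry of magnitude $2$ (so that $\gamma(K)=d(\mathbf b)$) precisely when some connector equals $\pm 2$. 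Writing $s$ for the number of $\pm 2$ connectors, one has $z\le n-1-s$, and substituting into $\gamma(K)=nm+(n-1)-2z+\eta_K$, with $\eta_K=0$ iff $s\ge 1$, reduces the claim to elementary inequalities valid for all $n\ge 3$ and $m\ge 2$, yielding $\gamma(K)\ge 3(d(\mathbf a)+1)-4=3\gamma(J)-4$; the inequality becomes sharp here, for instance when $n=3$ and all connectors vanish. The main difficulty throughout is exactly this bookkeeping in the final case: the single unit separating $3\gamma(J)-4$ from the easy bound $3\gamma(J)-5$ is captured only by tracking the interplay between the $\pm 2$--versus--depth correction of Proposition~\ref{depth or depth plus one} and the precise length loss caused by zero connectors.
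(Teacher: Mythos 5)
Your proposal is correct and follows essentially the same route as the paper's proof: invoke Theorem~\ref{ORS theorem and converse} to get a parsing with an odd number $\ge 3$ of tiles, handle the case where $\mathbf a$ has a $\pm 2$ entry by iterating Lemma~\ref{depth of adding one more tile} (with Lemma~\ref{reversing result}) to get $d(\mathbf b)\ge 3\,d(\mathbf a)-2$, and handle the all-$|a_i|\ge 4$ case by counting auxiliary data $(1,0)$/$(1,1)$ directly in the collapsed vector $\mathbf b'$, splitting on whether some connector equals $\pm 2$ and bounding the number of zero connectors. Your bookkeeping (every $d_i=1$, $|\mathbf b'|=nm+(n-1)-2z$, $z\le n-1-s$, the correction $\eta_K$ from Proposition~\ref{depth or depth plus one}) matches the paper's Cases I, IIa, IIb, and you identify the same sharpness configuration ($n=3$, all connectors zero).
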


\begin{proof} We may assume that $J$ corresponds to the fraction $p/q$ with $0<p<q$, $\gcd(p,q)=1$, $p$ is even, and $q$ is odd. Let $p/q=[{\bf a}]=[a_1, a_2, \dots, a_{2n}]$ be the unique even continued fraction expansion of $p/q$. Because $K>J$, it follows from Theorem~\ref{ORS theorem and converse} that $K$ corresponds to a fraction $p'/q'$ with continued fraction expansion given by 
$$p'/q'=[{\bf b}]=[\epsilon_1{\bf a}, 2c_1, \epsilon_2{\bf a}^{-1}, 2c_2, \dots, 2c_{2k}, \epsilon_{2k+1}{\bf a}],$$ 
where each  $\epsilon_i \in \{-1,1\}$, each $c_i$ is an integer, and if $c_i=0$ then $\epsilon_i=\epsilon_{i+1}$. Moreover, $2k+1\ge 3$ because $K \neq J$.
If some connectors, $2c_i$, are equal to zero, then we will collapse them by Lemma~\ref{collapsing zeroes}, to arrive at a shorter vector ${\bf b}'$, which will then have all even and nonzero entries and hence will be the unique even continued fraction expansion for $p'/q'=[{\bf b}']$. As both vectors determine the same fraction, $d({\bf b})=d({\bf b}')$.

We first note that if $\bf a$ contains an entry equal to $\pm 2$, then so does ${\bf b}'$. Hence if $\gamma(J)=d({\bf a})$, then $\gamma(K)=d({\bf b}')$. To see this, assume that
$\bf a$ contains an entry that is $\pm 2$. If this entry is not the first or last entry of $\bf a$, then it appears somewhere in $\bf b$ and continues to appear in ${\bf b}'$ even after any zero-connectors are eliminated. If instead, the first or last entry of $\bf a$ is $\pm 2$, then so is either the first or last entry of $\bf b$, respectively,  and hence also, ${\bf b}'$. Thus ${\bf b}'$ contains an entry that is $\pm 2$.  The second part of the conclusion follows from Proposition~\ref{depth or depth plus one}.

We now consider two major cases.

\noindent{\bf Case I:} The vector $\bf a$ contains some entry equal to $\pm 2$.

By our previous observation, we have that $\gamma(J)=d({\bf a})$ and $\gamma(K)=d({\bf b})=d({\bf b}') $. We can build $\bf b$ in $2k$ steps by starting with $\bf a$ and then repeatedly appending $( 2c_i, \epsilon_{i+1}{\bf a}^{\pm 1})$. Applying Lemma~\ref{depth of adding one more tile} a total of $2k$ times, and using Lemma~\ref{reversing result}, we obtain $d({\bf b})\ge (2k+1)d({\bf a})-2k$. It then follows that
\vspace{-5pt}
\begin{align*}
\gamma(K) &\ge (2k+1) \gamma(J) - 2k\\
&= (2k+1)(\gamma(J)-1)+1 \\
&\ge 3 \gamma(J)-2
\end{align*}

\noindent since $2k+1 \ge 3$.

\noindent{\bf Case II:} No entry of the vector $\bf a$ is equal to $\pm 2$.

Because every entry of $\bf a$ has magnitude 4 or more, every entry has auxiliary data of $(1,0)$. Therefore, 
$d({\bf a})=|{\bf a}|$, where $|{\bf a}|$ is the length of the vector $\bf a$, and moreover $\gamma(J)=d({\bf a})+1= |{\bf a}|+1$. We now break this case into two subcases.

\noindent{\bf Case IIa:} No entry of the vector $\bf a$ is equal to $\pm 2$ but some connector in $\bf b$ is equal to $\pm 2$.

Because some connector in $\bf b$ is $\pm 2$, after collapsing any zero-connectors, it is still the case that ${\bf b}'$ contains an entry equal to $\pm 2$. Thus  $\gamma(K)=d({\bf b}')$. We also have that every entry of  ${\bf b}'$, except possibly some connectors, has magnitude at least 4. Thus, all non-connector entries of ${\bf b}'$ have auxiliary data of $(1,0)$. Because any connector of magnitude 2 follows  an element of magnitude of 4 or more, it has auxiliary data of $(1,1)$. Therefore,  $d({\bf b}')=|{\bf b}'|$. If $z$ is the number of zero-connectors in $\bf b$, then we are assuming that $z\le 2k-1$ and hence that $-2z\ge -4k+2$. Since $2k+1 \ge 3$, we now have
\begin{align*}
\gamma(K)&=|{\bf b}'|\\
&=|{\bf b}|-2z\\
&=(2k+1)|{\bf a}|+2k-2z\\
&\ge(2k+1)|{\bf a}|-2k+2\\
&= (2k+1)(\gamma(J)-2)+3\\
 &\geq 3\gamma(J) - 3
\end{align*}

\noindent{\bf Case IIb:}  No entry of the vector $\bf a$ is equal to $\pm 2$ and no connector in $\bf b$ is equal to $\pm 2$.

In this case, the vector $\bf b$ does not contain an entry equal to $\pm 2$ and this remains true after collapsing any connectors which are equal to zero. Thus  $\gamma(J)=d({\bf a})+1$ and $\gamma(K)=d({\bf b}')+1$. Every entry of both $\bf a$ and ${\bf b}'$ have magnitude $4$ or more. Therefore, we have $d({\bf a})=|{\bf a}|$ and $d({\bf b}')=|{\bf b}'|$. Assuming that $\bf b$ has $z$ zero-connectors with $z\le 2k$, we have
\begin{align*}
\gamma(K)&=|{\bf b}'|+1\\
&=|{\bf b}|-2z+1\\
&=(2k+1)|{\bf a}|+2k-2z+1\\
&\ge(2k+1)|{\bf a}|-2k+1\\
&=(2k+1)(\gamma(J)-2)+2\\
&\ge 3 \gamma(J)-4
\end{align*}
\end{proof}

We point out that in the argument above, the inequality $\gamma(K)\ge 3 \gamma(J) -4$ is strict in both Case I and Case IIa. As a result, Case IIb is the only case where it is possible to achieve equality. Moreover, tracing through the argument in Case IIb, we obtain equality precisely when both $2k+1 = 3$ and every connector is zero. As a result, we have the following corollary.

\begin{corollary}
Suppose $K$ and $J$ are 2-bridge knots with $K>J$. If $\gamma(K)= 3 \gamma(J) -4,$ then $J$ corresponds to an even continued fraction $[{\bf a}] = [a_1, a_2, \ldots, a_{2n}]$ where each $|a_i|\geq 4$, and $K$ corresponds to the continued fraction $[{\bf a}, 0, {\bf a^{-1}}, 0, {\bf a}]$.
\end{corollary}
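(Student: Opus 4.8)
The plan is to read the corollary off the proof of Theorem~\ref{main theorem} by pinning down exactly when each inequality in that argument is an equality. First I would argue that equality $\gamma(K)=3\gamma(J)-4$ can occur \emph{only} in Case IIb. Indeed, Case I gives $\gamma(K)\ge 3\gamma(J)-2$ and Case IIa gives $\gamma(K)\ge 3\gamma(J)-3$, both strictly larger than $3\gamma(J)-4$, so neither is compatible with equality. This forces the defining hypotheses of Case IIb, namely that no entry of $\bf a$ is $\pm 2$ and no connector of $\bf b$ is $\pm2$. Since $\bf a$ is the even continued fraction of $p/q$, every $a_i$ is even and nonzero, hence $|a_i|\ge 2$; combined with $a_i\ne\pm 2$ this yields $|a_i|\ge 4$ for all $i$, which is precisely the asserted form of $J$.

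Next I would track the two inequalities in the Case IIb chain. Writing $z$ for the number of zero connectors and recalling $\gamma(J)=|{\bf a}|+1$, the computation is $\gamma(K)=(2k+1)|{\bf a}|+2k-2z+1\ge(2k+1)|{\bf a}|-2k+1=(2k+1)(\gamma(J)-2)+2\ge 3\gamma(J)-4$. The first inequality uses $z\le 2k$ and is an equality exactly when $z=2k$, i.e.\ when every connector is zero. The second uses $2k+1\ge 3$; because $\gamma(J)=|{\bf a}|+1\ge 3$ forces $\gamma(J)-2\ge 1>0$, it is an equality exactly when $2k+1=3$, that is $k=1$. Thus equality requires exactly three tiles and both connectors zero, giving ${\bf b}=(\epsilon_1{\bf a},0,\epsilon_2{\bf a}^{-1},0,\epsilon_3{\bf a})$.

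Finally I would dispose of the signs. Since both connectors vanish, the parsing condition (if $c_i=0$ then $\epsilon_i=\epsilon_{i+1}$) forces $\epsilon_1=\epsilon_2=\epsilon_3=:\epsilon$, so ${\bf b}=\epsilon({\bf a},0,{\bf a}^{-1},0,{\bf a})$. If $\epsilon=1$ this is literally $[{\bf a},0,{\bf a}^{-1},0,{\bf a}]$, and if $\epsilon=-1$ then $[{\bf b}]=-[{\bf a},0,{\bf a}^{-1},0,{\bf a}]$ (negating every entry negates the continued fraction's value); since $K_{-r}=K_r$ under the mirror-image equivalence already adopted, $K$ still corresponds to $[{\bf a},0,{\bf a}^{-1},0,{\bf a}]$, as claimed.

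I expect the genuinely delicate step to be the first one: being certain that equality is impossible outside Case IIb. One must confirm that the strict surpluses ($+2$ in Case I and $+1$ in Case IIa over the target $3\gamma(J)-4$) are genuine and cannot be offset by slack elsewhere in those cases, so that equality truly isolates Case IIb. By contrast, the inequality bookkeeping in Case IIb and the sign normalization are routine once the mirror-image convention and the identity $[-{\bf b}]=-[{\bf b}]$ are invoked.
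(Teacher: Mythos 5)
Your proposal is correct and follows essentially the same route as the paper: the paper also observes that the bounds in Case I and Case IIa ($\gamma(K)\ge 3\gamma(J)-2$ and $\gamma(K)\ge 3\gamma(J)-3$) are strictly above $3\gamma(J)-4$, so equality forces Case IIb, and then traces that chain to get $2k+1=3$ and all connectors zero. Your treatment is in fact slightly more careful than the paper's, which leaves implicit both the point that $\gamma(J)-2\ge 1$ (needed so that $2k+1=3$ is forced rather than merely $(2k+1-3)(\gamma(J)-2)=0$) and the sign normalization $\epsilon_1=\epsilon_2=\epsilon_3$ handled via the mirror-image convention.
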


To illustrate the corollary, let $J_n$ be the 2-bridge knot corresponding to the fraction $\frac{2n}{4n^2+1}=[2n,2n]$ for $n>1$, and let $K_n$ be the 2-bridge knot corresponding to $\frac{32 n^3+6n}{64n^4+20n^2+1}=[2n,2n,0,2n,2n,0,2n,2n]=[2n,4n,4n,2n]$.  This gives infinitely many different pairs of knots, $(K_n, J_n)$, with $K_n>J_n$ by Theorem~\ref{ORS theorem and converse}.  Propositions~\ref{depth or depth plus one} and \ref{aux data depth} tell us that $\gamma(K_n)=5$ and $\gamma(J_n)=3$. Because $5=3\cdot 3-4$,  we see that the inequality of Theorem~\ref{main theorem} is sharp.

Finally, we summarize our work in the following algorithm to compute the crosscap number of any 2-bridge knot $K$. 

\begin{algorithm}[]\label{HSV algorithm}
To compute the crosscap number $\gamma(K_{p/q})$ of the 2-bridge knot $K_{p/q}$, where  $0<p<q$, $\gcd(p,q)=1$,  $p$ is even and $q$ is odd.
\begin{enumerate}
\item Compute the unique even continued fraction expansion $p/q=[a_1, a_2, \dots, a_{2n}]$ where each $a_i$ is even and nonzero.
\item Compute the auxiliary data and $d(p/q)$ using  Lemma~\ref{aux data depth}.
\item If $a_i =\pm 2$ for some $i$, then $\gamma(K) = d(p/q)$; otherwise $\gamma(K) = d(p/q)+1$.
\end{enumerate}
\end{algorithm}

A Jupyter notebook containing a Python program that implements Algorithm~\ref{HSV algorithm} is included with the arXiv upload of this paper.

\begin{figure}
\begin{center}
\begin{tabular}{cc|rrrrrrrr|r}
\multicolumn{9}{c}{\hspace{1cm}Crosscap number, $\gamma(K)$}\\
& &1&2&3&4&5&6&7&8&Total\\
\hline
 &3 & 1 & 0 & 0 & 0 & 0 & 0 & 0 & 0 & 1 \\
 &4 & 0 & 1 & 0 & 0 & 0 & 0 & 0 & 0 & 1 \\
 &5 & 1 & 1 & 0 & 0 & 0 & 0 & 0 & 0 & 2 \\
 &6 & 0 & 2 & 1 & 0 & 0 & 0 & 0 & 0 & 3 \\
 &7 & 1 & 2 & 4 & 0 & 0 & 0 & 0 & 0 & 7  \\
 &&&&&&&&&&\vspace{-2cm}  \\
 {\rotatebox[origin=c]{90}{Crossing number, cr(K)}}&  8 & 0 & 3 & 7 & 2 & 0 & 0 & 0 & 0 & 12 \\
 &&&&&&&&&&\vspace{-2cm}  \\
 &9 & 1 & 3 & 12 & 8 & 0 & 0 & 0 & 0 & 24 \\
 &10 & 0 & 4 & 17 & 21 & 3 & 0 & 0 & 0 & 45 \\
 &11 & 1 & 4 & 26 & 43 & 17 & 0 & 0 & 0 & 91 \\
 &12 & 0 & 5 & 33 & 78 & 53 & 7 & 0 & 0 & 176 \\
 &13 & 1 & 5 & 44 & 127 & 136 & 39 & 0 & 0 & 352 \\
 &14 & 0 & 6 & 53 & 194 & 278 & 150 & 12 & 0 & 693 \\
 &15 & 1 & 6 & 68 & 280 & 526 & 419 & 87 & 0 & 1387 \\
 &16 & 0 & 7 & 79 & 389 & 889 & 989 & 375 & 24 & 2752 \\
 \end{tabular}
\end{center}
\caption{Crosscap numbers by crossing number for 2-bridge knots with 16 or fewer crossings.}
\label{distribution}
\end{figure}

\setcounter{section}{3}
\section{Computations}

Using our implementation of Algorithm~\ref{HSV algorithm}, we computed the crosscap number of every 2-bridge knot with 16 or fewer crossings. Our computations agree with the data in \cite{knotinfo}, for knots up to 12 crossings (the limit of KnotInfo's table).  A table of crosscap numbers is given in \cite{Hirasawa_Teragaito:2006} for 2-bridge knots with 12 or fewer crossings. Our computations agree with their data as well. In Figure~\ref{distribution}, we give the distribution of crosscap numbers for all 2-bridge knots up to 16 crossings. It was proven in \cite{Murakami_Yasuhara_1995} that for any knot $K$
$$\gamma(K)\le \Bigl \lfloor \frac{cr(K)}{2} \Bigr \rfloor, $$
where $cr(K)$ is the crossing number of $K$ and $\lfloor x \rfloor$ is the floor of $x$, that is, the largest integer less than or equal to $x$. This inequality is confirmed by our computations and is borne out in Table~\ref{distribution}.

\bibliographystyle{plain}
\bibliography{myReferences}

\end{document}